%11/10/2015
\def\version{11/10/2015 version 4
\hfill \href{http://arxiv.org/abs/1208.1746}{arXiv:1208.1746}
}

\documentclass[a4paper,11pt]{amsart}
\usepackage{amssymb,amscd,amsxtra}
\usepackage[numeric,lite]{amsrefs}
\usepackage{fullpage}
\usepackage[all]{xy}
\usepackage{mathrsfs}
\usepackage{hyperref}
\usepackage{stmaryrd}

\usepackage{braket}

\usepackage{pigpen}

\def\PB{\text{\pigpenfont J}}

\usepackage{texdraw}
\input{txdtools}

% DIMENSIONS: Every cobordism has a width of 40 and the distance
% between the centre of two input circles is 30
%
% The starting point is always the lower input hole, and the end
% position is always the lower output hole.  For bilinFA and cobilinFA
% the starting point is the lower hole and the end position.  couniFA
% and unitFA leave the end position equal to the start position

% \begin{document}

% we initialise the fatness variable
\def\fatness{1.2}
\everytexdraw{%
    \drawdim pt \linewd 0.5 \textref h:C v:C
  \arrowheadsize l:4 w:3
  \arrowheadtype t:V
}

%%%%%%%%%%%%%%%%%%%%%%%%%%%%%%%%%%%%%%%%%%%%%%%%%%%%%%%%%%%%%%

\newcommand{\counitFA}{%
    \bsegment
        \realmult {\fatness} {2.5} \xrad
        \realmult {\fatness} {6.25} \yrad
        \move (0 0)
        \move (20 0)
        \move (0 0) \lellip rx:{\xrad} ry:{\yrad}
        \realadd { 0} { \yrad} \inB
        \realadd { 0} {-\yrad} \inA
        \move (0 {\inB}) \rlvec (8 0) %upper edge
        \move (0 {\inA}) \rlvec (8 0) %lower edge
      \move (8 {\inA}) \clvec (20 {\inA})(20 {\inB})(8 {\inB}) %u-shape

        \savepos (0 0)(*ex *ey)
    \esegment
    \move (*ex *ey)
}
\newcommand{\unitFA}{%
    \bsegment
        \realmult {\fatness} {2.5} \xrad
        \realmult {\fatness} {6.25} \yrad
        \move (0 0)
        \move (-20 0)
        \move (0 0) \lellip rx:{\xrad} ry:{\yrad}
        \realadd { 0} { \yrad} \inB
        \realadd { 0} {-\yrad} \inA
        \move (0 {\inB}) \rlvec (-8 0) %upper edge
        \move (0 {\inA}) \rlvec (-8 0) %lower edge
      \move (-8 {\inA}) \clvec (-20 {\inA})(-20 {\inB})(-8 {\inB}) %u-shape

        \savepos (0 0)(*ex *ey)
    \esegment
    \move (*ex *ey)
}

\newcommand{\bilinFA}{%
  \bsegment
        \realmult {\fatness} {2.5} \xrad
        \realmult {\fatness} {6.25} \yrad
        \move (0 0) \lellip rx:{\xrad} ry:{\yrad}
        \move (0 30) \lellip rx:{\xrad} ry:{\yrad}
        \realadd {30} { \yrad} \inD
        \realadd {30} {-\yrad} \inC
        \realadd { 0} { \yrad} \inB
        \realadd { 0} {-\yrad} \inA
                \realmult {\yrad} {2} \diam
        \realadd {24} {\diam} \storsving
        \move ({\storsving} 0) %just to get a proper bounding box

        \move (0 {\inB}) \clvec (18 {\inB})(18 {\inC})(0 {\inC}) %inner u-shape
        \move (0 {\inA}) \clvec ({\storsving} {\inA})({\storsving} {\inD})(0 {\inD}) %outer u

        \savepos (0 30)(*ex *ey)
\esegment
\move (*ex *ey)
}

\newcommand{\intextbilinFA}{%
  \raisebox{-2.2pt}{\begin{texdraw}
  \drawdim pt \linewd 0.3 \setunitscale 0.3
  \bilinFA
  \move (-6 0)
  \move (30 0)
  \end{texdraw}}
}

\newcommand{\intextunitFA}{%
  \raisebox{0.9pt}
  {\begin{texdraw}
  \drawdim pt \linewd 0.3 \setunitscale 0.45
  \unitFA
  \move (6 0)
  \end{texdraw}}
}
\newcommand{\intextcounitFA}{%
  \raisebox{0.9pt}
  {\begin{texdraw}
  \drawdim pt \linewd 0.3 \setunitscale 0.45
  \counitFA
  \move (-6 0)
  \end{texdraw}}
}

%%%%%%%%%%%%%%%%%%%%%%%%%%%%%%%%%%
%                                %
%   RELATIONS AND OTHER CHUNKS   %
%                                %
%%%%%%%%%%%%%%%%%%%%%%%%%%%%%%%%%%

\newcommand{\onedot}[1]{
  \bsegment
    \move (0 0) \fcir f:0 r:2
    \esegment
}

%%%%%%%%%%%%%%%%%%%%%%% IABS's modifications %%%%%%%%%%%%%%%%%%

% xy-pic hack: correct spacing for cofibrations.
% Use @{ >->} instead of @{>->}
\newdir{ >}{{}*!/-8pt/@{>}}

\theoremstyle{plain}
\newtheorem{thm}{Theorem}[section]

\newtheorem{lem}[thm]{Lemma}
\newtheorem{prop}[thm]{Proposition}
\newtheorem{cor}[thm]{Corollary}
\theoremstyle{definition}
\newtheorem{rem}[thm]{Remark}

\newtheorem*{Assump}{Assumption}
\numberwithin{equation}{section}

\hyphenation{hom-o-l-ogy hom-o-l-og-ies co-hom-o-l-og-ies
co-hom-o-l-ogy homo-mor-ph-ism mono-mor-ph-ism epi-mor-ph-ism
iso-mor-ph-ism co-mod-ule bi-co-mod-ule Hur-e-wicz alg-e-br-oid
co-ten-s-or Hat-t-ori co-ass-oc-ia-tiv-ity John-son--Wil-son
or-ie-nt-a-b-il-ity co-fin-al Ar-t-in--Schr-eier Fro-b-en-ius}
%macros

\def\ie{\emph{i.e.}}
\def\ds{\displaystyle}
\def\:{\colon}
\def\.{\cdot}
\def\<{\langle}%\left
\def\>{\rangle}%\right
\def\({\left(}
\def\){\right)}
\def\ph#1{\phantom{#1}}
\def\epsilon{\varepsilon}
\def\phi{\varphi}
\def\leq{\leqslant}
\def\geq{\geqslant}
\def\lla{\longleftarrow}
\def\la{\leftarrow}
\def\lra{\longrightarrow}

\def\ra{\rightarrow}
\def\bar#1{\overline{#1}}
\def\hat#1{\widehat{#1}}
\def\tilde#1{\widetilde{#1}}
\def\iso{\cong}

\DeclareMathOperator{\im}{im}

\def\F{\mathbb{F}}

\def\k{\Bbbk}
\def\Q{\mathbb{Q}}

\def\Z{\mathbb{Z}}

\def\ideal{\triangleleft}

\DeclareMathOperator{\Hom}{Hom}
\DeclareMathOperator{\inc}{inc}

\DeclareMathOperator{\rad}{rad}

\def\id{\mathrm{id}}
\DeclareMathOperator*{\colim}{colim}

\DeclareMathOperator{\aug}{aug}
\DeclareMathOperator{\Char}{char}
\DeclareMathOperator{\ev}{ev}

\DeclareMathOperator{\soc}{soc}

\DeclareMathOperator{\ind}{ind}

\DeclareMathOperator{\res}{res}

\DeclareMathOperator{\Tr}{Tr}
\DeclareMathOperator{\unit}{unit}
\DeclareMathOperator{\quo}{quo}
\DeclareMathOperator{\Spec}{Spec}
\def\CAlg{\mathscr{CA}}
\def\Mod{\mathscr{M}}

%endmacros

\begin{document}
\title[Frobenius Green functors]
{Frobenius Green functors}
\author{Andrew Baker}
\address{School of Mathematics \& Statistics,
University of Glasgow, Glasgow G12 8QW, Scotland.}
\email{a.baker@maths.gla.ac.uk}
\urladdr{\href{http://www.maths.gla.ac.uk/~ajb}
                       {http://www.maths.gla.ac.uk/~ajb}}
\date{\version}
\keywords{Mackey functor, Green functor, Frobenius algebra,
$p$-divisible group}
\subjclass[2010]{Primary 19A22, 55R35; Secondary 55N20, 55N22}
%\thanks{}
%\begin{abstract}
%\end{abstract}

\maketitle

\section*{Introduction}

These notes provide an informal introduction to a type of Mackey
functor that arises naturally in algebraic topology in connection
with Morava $K$-theory of classifying spaces of finite groups.
The main aim is to identify key algebraic aspects of the Green
functor structure obtained by applying a Morava $K$-theory to
such classifying spaces. This grew out of joint work with Birgit
Richter~\cite{AB&BR:GaloisLT(BG)}. Of course the Morava $K$-theory
and Lubin-Tate theory of such classifying spaces were important
amongst subjects of~\cites{HKR:K*BG,HKR:GenChar}, and our work
is very much a footnote to those from a topological viewpoint,
but we hope the algebraic structures encountered here are of
wider interest. Since the potential audience is varied, we have
tried to provide necessary background material

For each prime $p$ and each natural number~$n$ there is
a multiplicative cohomology theory $K(n;p)^*(-)=K(n)^*(-)$
defined on spaces which on a point takes the value
\[
K(n)^*(\mathrm{point}) = K(n)^* = \F_p[v_n,v_n^{-1}],
\]
where $v_n\in K(n)^{2-2p^n}$. If $p$ is odd, the values on
spaces are graded commutative $K(n)^*$-algebras, while for
$p=2$ there is a mild deviation from graded commutativity,
namely for a space $X$, and $u,v\in K(n)^*(X)$ of odd degree,
\[
uv-vu = v_n Q(u)Q(v)
\]
for a certain cohomology operation $Q$ on $K(n)^*(-)$; this
is referred to as \emph{quasi-commutativity} in~\cite{NPS:K(N)duality}.
In these notes we focus on aspects of the structure of
$K(n)^*(BG)$ where this complication does not significantly
affect our results.

In practise, it is useful to modify $K(n)^*(-)$ to obtain
a $2$-periodic theory $K_n^*(-)$ where
\[
K_n^*(\mathrm{point}) = K_n^* = \F[u,u^{-1}],
\]
where $u\in K_n^{-2}$ and $\F$ is either $\F_{p^n}$ or
$\bar{\F}_p$. For this theory, the grading is essentially
over $\Z/2$ (or $\Z/2(p^n-1)$) so the ideas of~\cite{CGO}
may prove useful. There is a Galois-theoretic relationship
between $K_n^*(-)$ and $K(n)^*(-)$, corresponding to extending
from $\F_p$ to $\F$, and adjoining a $(p^n-1)$-st root of
$v_n$; therefore passage in either direction between these
theories is well-understood. Note that $K_n^*$ is a graded
field in the obvious sense, so computations are often
simplified with the aid of a strict K\"unneth formula for
products:
\[
K_n^*(X\times Y) \iso K_n^*(X)\otimes_{K_n^*}K_n^*(Y).
\]

Remarkably, despite $BG$ being a large space (for example
it can usually be modelled by an infinite dimensional CW
complex or manifold) there is a finiteness result due to
Ravenel~\cite{DCR:K^*BG}: for every finite group~$G$,
$K_n^*(BG)$ is finite dimensional over $K_n^*$. This is
one of the key foundational results, the other being an
observation that $K_n^*(BG)$ is self-dual as a
$K_n^*(BG)$-module and therefore it is self-injective.
Since $K_n^*(BG)$ is also a local $K_n^*$-algebra, this
means that $K_n^*(BG)$ is a local Frobenius algebra (and
the choice of Frobenius structure is in some sense
functorial). This circle of ideas was explored by
Strickland~\cite{NPS:K(N)duality}. An important observation
(whose proof I learnt from Nick Kuhn) is that
$\Tr^G_1(1)\neq0$, and it easily follows that $\Tr^G_1(1)$
is a basis element for the socle of $K_n^*(BG)$.

Using properties of transfers for covering spaces, we can
consider $K_n^*(B-)$ as a Green functor on subgroups of a
fixed finite group~$G$. In fact, it extends to a globally
defined Green functor on all finite groups. Here we exploit
the fact that $K_n^*(BG)=K_n^*$ precisely when $p\nmid|G|$
to produce pushforward maps $(B\alpha)_*\:K_n^*(BH)\lra K_n^*(BK)$
whenever $\alpha\:H\lra K$ is a homomorphism for which
$p\nmid|\ker\alpha|$. However, Dwyer~\cite{WGD:TransferMaps} 
shows that for Morava $K$-theory at a prime~$p$, the latter 
condition is unnecessary; we will return to this case in 
a future paper.

In Appendix~\ref{sec:FrobAlg} we provide a brief review of
material on local Frobenius algebras.

\section{Recollections on Mackey and Green functors}
\label{sec:Mackey&GreenFunctors}

We refer to Webb~\cite{PW:MackeyGuide} for a convenient
general overview, other useful references are
Bouc~\cites{Bouc:LNM1671,Bouc:LNM1990}.

Let $R$ be a commutative ring (in practise in our work
it will be a field).

A \emph{Mackey functor} $\boldsymbol{M}$ on the subgroups
of a finite group $G$ and taking values in the category
of left $R$-modules ${}_R\Mod$, is an assignment
\[
(H\leq G) \mapsto \boldsymbol{M}(H)\in{}_R\Mod
\]
together with morphisms
\[
\res^H_K\: \boldsymbol{M}(H)\lra \boldsymbol{M}(K),
\quad
\ind^H_K\:\boldsymbol{M}(K)\lra \boldsymbol{M}(H),
\quad
c_g\:\boldsymbol{M}(H)\lra \boldsymbol{M}(gHg^{-1})
\]
for $K\leq H\leq G$ and $g\in G$ which satisfy the following
axioms.
\begin{enumerate}
\item[(\textbf{MF1})]
For $H\leq G$ and $h\in H$,
\[
\res^H_H=\ind^H_H=c_h=\id\:\boldsymbol{M}(H)\lra\boldsymbol{M}(H).
\]
\item[(\textbf{MF2})]
For $L\leq K\leq H\leq G$,
\[
\res_L^K\res_K^H = \res_L^H,
\quad
\ind^H_K\ind^K_L = \ind^H_L.
\]
\item[(\textbf{MF3})]
For $g_1,g_2\in G$ and $H\leq G$,
\[
c_{g_1}c_{g_2}=c_{g_1g_2}\:\boldsymbol{M}(H)\lra\boldsymbol{M}(g_1g_2Hg_2^{-1}g_1^{-1}).
\]
\item[(\textbf{MF4})]
For $K\leq H\leq G$ and $g\in G$,
\[
\res^{gHg^{-1}}_{gKg^{-1}}c_g = c_g\res^H_K,
\quad
\ind^{gHg^{-1}}_{gKg^{-1}}c_g = c_g\ind^H_K.
\]
\item[(\textbf{MF5})]
(\emph{Mackey double coset/decomposition formula})
For $H\leq G$ and $K\leq H\geq L$,
\[
\res^H_L\ind^H_K =
\sum_{g\:L\backslash G/K}
    \ind^L_{L\cap gKg^{-1}}c_g\res^K_{g^{-1}Lg\cap K},
\]
where the sum is over a complete set of representatives
for the set of double cosets $L\backslash G/K$.
\end{enumerate}

Such a Mackey functor $\boldsymbol{A}$ is a \emph{Green functor}
if furthermore $\boldsymbol{A}(H)$ is an $R$-algebra for $H\leq G$,
and the following are satisfied.
\begin{enumerate}
\item[(\textbf{GF1})]
For $K\leq H\leq G$ and $g\in G$, $\res^H_K$ and $c_g$ are
$R$-algebra homomorphisms.
\item[(\textbf{GF2})]
(\emph{Frobenius axiom})
For $K\leq H\leq G$, $x\in\boldsymbol{A}(K)$ and $y\in\boldsymbol{A}(H)$,
\[
\ind^H_K(x\res^H_K(y)) = \ind^H_K(x)y,
\quad
\ind^H_K(\res^H_K(y)x) = y\ind^H_K(x).
\]
\end{enumerate}
\begin{rem}\label{rem:FrobAx-Module}
Notice that if $K\leq H\leq G$, $\boldsymbol{A}(K)$ becomes
both a left and a right $\boldsymbol{A}(H)$-module via $\res^H_K(y)$,
so that for $x\in\boldsymbol{A}(K)$ and $y\in\boldsymbol{A}(H)$,
\[
y\.x = \res^H_K(y)x, \quad x\.y = x\res^H_K(y).
\]
Then the Frobenius axiom simply asserts that
$\ind^H_K\:\boldsymbol{A}(K)\lra\boldsymbol{A}(H)$ is both
a left and a right $\boldsymbol{A}(H)$-module homomorphism.
\end{rem}

Now suppose that $\mathcal{X},\mathcal{Y}$ are two collections
of finite groups where $\mathcal{X}$ satisfies the following
conditions. We say that $K$ is a \emph{section} of a group~$G$
if there is a subgroup $H\leq G$ and an epimorphism $H\lra K$.
\begin{itemize}
\item
If $G\in\mathcal{X}$ and $K$ is a section of $G$, then
$K\in\mathcal{X}$.
\item
Let $G',G''\in\mathcal{X}$. If
\[
1\ra G'\lra G\lra G''\ra 1,
\]
is a short exact sequence, then $G\in\mathcal{X}$.
\end{itemize}

A \emph{globally defined Mackey functor} with respect
to $\mathcal{X},\mathcal{Y}$ on finite groups and
taking values in $R$-modules, is an assignment of an
$R$-module $\boldsymbol{M}(G)$ to each finite group~$G$,
for each homomorphism $\alpha\:G\lra H$ with
$\ker\alpha\in\mathcal{X}$ a homomorphism
$\alpha^*\:\boldsymbol{M}(H)\lra\boldsymbol{M}(G)$,
and for each homomorphism $\beta\:K\lra L$ with
$\ker\alpha\in\mathcal{Y}$ a homomorphism
$\beta_*\:\boldsymbol{M}(K)\lra\boldsymbol{M}(L)$,
satisfying the following conditions.
\begin{enumerate}
\item[(\textbf{GD1})]
When these are defined, $(\alpha_1\alpha_2)^* = \alpha_2^*\alpha_1^*$
and $(\beta_1\beta_2)_* = (\beta_1)_*(\beta_2)_*$.
\item[(\textbf{GD2})]
If $\gamma\:G\lra G$ is an inner automorphism, then
$\gamma^*=\id=\gamma_*$.
\item[(\textbf{GD3})]
Given a pullback diagram of finite groups
\[
\xymatrix{
%\alpha^{-1}(\delta K)
J \ar@{->>}[dd]_\beta\ar@{ >->}[rr]^\gamma
                             && G\ar@{->>}[dd]^\alpha  \\
                             && \\
K\ar@{ >->}[rr]^\delta && H\ar@{}[uull]|<<<<<<<<<<<<{\PB}
}
\]
the following hold:
\begin{itemize}
\item
if $\ker\alpha=\ker\beta\in\mathcal{Y}$, then $\delta^*\alpha_* = \beta_*\gamma^*$;
\item
if $\ker\alpha=\ker\beta\in\mathcal{X}$, then $\alpha^*\delta_* = \gamma_*\beta^*$.
\end{itemize}
\item[(\textbf{GD4})]
For a commutative diagram of epimorphisms of finite groups
\[
\xymatrix{
G\ar@{->>}[dd]_\alpha\ar@{->>}[rr]^\beta && K\ar@{->>}[dd]^\gamma \\
   && \\
H\ar@{->>}[rr]^(.4)\delta && G/\ker\alpha\ker\beta
}
\]
with $\ker\alpha\in\mathcal{Y}$ and $\ker\beta\in\mathcal{X}$,
we have $\alpha_*\beta^* = \delta^*\gamma_*$.
\item[(\textbf{GD5})]
(Mackey formula)
For subgroups $H\leq G \geq K$ with inclusion maps $\iota^G_H\:H\lra G$,
etc, and $c_g$ induced by $g^{-1}(-)g\:K\cap gHg^{-1}\lra g^{-1}Kg\cap H$,
\[
(\iota^G_K)^*(\iota^G_H)_* =
\sum_{g\:K\backslash G/H}
     (\iota^K_{K\cap gHg^{-1}})_*c_g(\iota^H_{g^{-1}Kg\cap H})^*.
\]
\end{enumerate}

Such a globally defined Mackey functor $\boldsymbol{A}$ is a
\emph{globally defined Green functor} if
\begin{enumerate}
\item[(\textbf{GD6})]
Whenever $\alpha\:G\lra H$ is a homomorphism for which $\alpha^*$
is defined, then $\alpha^*\:\boldsymbol{A}(H)\lra\boldsymbol{A}(G)$
is an $R$-algebra homomorphism.
\item[(\textbf{GD7})]
(Frobenius axiom)
Suppose that $\beta\:K\lra L$ is a homomorphism for which $\beta^*$
and $\beta_*$ are both defined. Note that $\beta^*$ induces left and
right $\boldsymbol{A}(H)$-module structures on $\boldsymbol{A}(G)$
(these coincide when $\boldsymbol{A}(G)$ is commutative). Then
$\beta_*\:\boldsymbol{A}(G)\lra\boldsymbol{A}(H)$ is a homomorphism
of left and right $\boldsymbol{A}(H)$-modules.
\end{enumerate}

For our purposes we will take $\mathcal{X}$ to consist of 
all finite groups, and $\mathcal{Y}$ to consist of either 
trivial groups or all groups of order not divisible by some 
prime $p>0$.

\section{Local Artinian Green functors and globally defined extensions}
\label{sec:GreenFunctors}

Suppose that $\boldsymbol{A}$ is a Green functor on the subgroups
of $G$ taking values in the category of local Artinian $\k$-algebras,
where~$\k$ is a field with $\Char\k=p\geq0$. For $H\leq G$, we will
write $\mathfrak{m}(H)\ideal\boldsymbol{A}(H)$ for the unique maximal
left ideal of $\boldsymbol{A}(H)$; this agrees with the Jacobson
radical, $\mathfrak{m}(H)=\rad\boldsymbol{A}(H)$, which is a two-sided
ideal. If $K\leq H\leq G$, then $\res^H_K$ is a local algebra
homomorphism, \ie, $\res^H_K\mathfrak{m}(H)\subseteq\mathfrak{m}(K)$.
We will call such a Green functor \emph{local Artinian}.

More generally, we can suppose that $\boldsymbol{A}$ is a globally
defined Green functor taking values in the category of local Artinian
$\k$-algebras. In the notation of~\cite{PW:MackeyGuide}*{section~8},
we will take $\mathcal{X},\mathcal{Y}$ to consist of suitable
collections of finite groups. Then the restriction/inflation
homomorphism $\alpha^*\:\boldsymbol{A}(H)\lra\boldsymbol{A}(G)$
associated to a homomorphism $\alpha\:G\lra H$ with
$\ker\alpha\in\mathcal{X}$ is a local algebra homomorphism, and
the induction homomorphism
$\beta_*\:\boldsymbol{A}(K)\lra\boldsymbol{A}(L)$ associated to
a homomorphism $\beta\:K\lra L$ with $\ker\beta\in\mathcal{Y}$
is a left and right $\boldsymbol{A}(L)$-module homomorphism with
respect to the $\boldsymbol{A}(L)$-module structure induced on
$\boldsymbol{A}(K)$ by $\beta^*$. When $\beta$ is the inclusion
of a subgroup we will also write $\beta^*=\res^L_K$ and
$\beta_*=\ind^L_K$.
%\begin{rem}\label{rem:local}
%Here $\alpha^*$ is a local algebra homomorphism, \ie,
%$\alpha^*\mathfrak{m}(H)\subseteq\mathfrak{m}(G)$, while
%$\beta_*$ is a left/right $\boldsymbol{A}(L)$-module homomorphism
%with respect to the $\boldsymbol{A}(L)$-module structure induced
%on $\boldsymbol{A}(K)$ by $\beta^*$.
%\end{rem}

From now on we assume $\boldsymbol{A}$ is a globally defined
local Artinian Green functor, where we take $\mathcal{X}$ to
consist of all finite groups and $\mathcal{Y}$ to consist of
the trivial groups. One of our goals is to extend $\boldsymbol{A}$
to a Green functor where $\mathcal{Y}$ is enlarged to include
all groups of order not divisible by~$p$.

We make some further assumptions.
\begin{Assump}[\textbf{A}]
For any trivial group $1$, $\boldsymbol{A}(1) = \k$.
\end{Assump}

Notice that for any finite group $G$ and any trivial group~$1$,
the diagram
\[
\xymatrix{
1\ar[r]\ar@/^14pt/[rr]^{=} & G\ar[r] & 1
}
\]
induces
\[
\xymatrix{
\k=\boldsymbol{A}(1)\ar[r]_(.55){\unit}\ar@/^16pt/[rr]^{=}
                & \boldsymbol{A}(G)\ar[r]_(.45){\aug}
                  & \boldsymbol{A}(1)=\k
}
\]
where $\aug=\res^G_1$. Therefore $\ker\aug=\mathfrak{m}(G)$
and
\begin{equation}\label{eq:residue=k}
\boldsymbol{A}(G)/\mathfrak{m}(G) = \k.
\end{equation}

\begin{Assump}[\textbf{B}]
For any finite group $G$, $0\neq\ind^G_1(1)\in\boldsymbol{A}(G)$.
\end{Assump}

When $H\leq K$, we can use the restriction $\res^K_H$ to induce
left and right $\boldsymbol{A}(K)$-module structures on $\boldsymbol{A}(H)$
and by the Frobenius axiom, $\ind^K_H\:\boldsymbol{A}(H)\lra\boldsymbol{A}(K)$
is an $\boldsymbol{A}(K)$-module homomorphism. If $z\in\ker\res^K_H$,
then for any $x\in\boldsymbol{A}(H)$,
\begin{equation}\label{eq:relativesoclecondition}
z\ind^K_H(x) = 0 = \ind^K_H(x)z.
\end{equation}
More generally these remarks apply to the restriction $\alpha^*$
associated to an arbitrary group homomorphism $\alpha$, and
to the associated $\alpha_*$ if it is defined.

We can now give a basic result. Recall that $p\geq0$ is the
characteristic of~$\k$.
\begin{lem}\label{lem:res-p'}
Suppose that $H\leq K$ and $p\nmid|K:H|$. \\
\emph{(a)} $\ind^K_H(1)\in\boldsymbol{A}(K)^\times$. \\
\emph{(b)} $\res^K_H\:\boldsymbol{A}(K)\lra\boldsymbol{A}(H)$
is a split monomorphism. In fact\/ $\ind^K_H\res^K_H$ is
right or left multiplication by a unit in $\boldsymbol{A}(K)$.
\end{lem}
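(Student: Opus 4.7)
The plan is to prove (a) first by computing the image of $\ind^K_H(1)$ under the augmentation, and then deduce (b) as an immediate consequence of the Frobenius axiom.

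For part (a), I would exploit the fact that $\boldsymbol{A}(K)$ is a local Artinian $\k$-algebra with residue field $\k$ and maximal ideal $\mathfrak{m}(K) = \ker\aug$, where $\aug = \res^K_1$; thus an element is a unit if and only if its image under $\aug$ is nonzero in $\k$. To compute $\res^K_1\ind^K_H(1)$, I would apply the Mackey double coset formula (\textbf{MF5}/\textbf{GD5}) to the pair $1\leq K\geq H$. The double cosets $1\backslash K/H$ are just the left cosets of $H$ in $K$, and since the intersections $1\cap gHg^{-1}$ and $g^{-1}\cdot 1\cdot g\cap H$ are both trivial, every summand collapses to $\id_{\boldsymbol{A}(1)}(1) = 1\in\k$. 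The number of summands equals $|K:H|$, giving
\[
\aug\bigl(\ind^K_H(1)\bigr) = |K:H|\cdot 1 \in\k.
\]
By hypothesis $p\nmid|K:H|$, so this is a nonzero element of $\k$, and hence $\ind^K_H(1)\notin\mathfrak{m}(K)$ is a unit.

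For part (b), I would apply the Frobenius axiom (\textbf{GF2}/\textbf{GD7}) with $x = 1\in\boldsymbol{A}(H)$ and arbitrary $y\in\boldsymbol{A}(K)$, obtaining
\[
\ind^K_H\bigl(\res^K_H(y)\bigr) = \ind^K_H(1)\cdot y = y\cdot \ind^K_H(1).
\]
In other words, $\ind^K_H\circ\res^K_H$ coincides with both left and right multiplication by the unit $u := \ind^K_H(1)$. Therefore $u^{-1}\ind^K_H$ is a left inverse of $\res^K_H$, which exhibits the latter as a split monomorphism and verifies the final claim of the lemma.

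I do not expect any real obstacle here: once the Mackey formula has been set up and the trivial group is seen to kill all the non-trivial data on the right, the calculation reduces to a count of cosets modulo $p$, and the Frobenius axiom does the rest. The only point requiring a bit of care is ensuring that $c_g$ acts as the identity on $\boldsymbol{A}(1) = \k$, which follows from (\textbf{MF3})/(\textbf{GD2}) since the relevant inner conjugation by $g$ sends the trivial subgroup to itself.
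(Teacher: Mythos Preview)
Your proposal is correct and follows essentially the same route as the paper: compute $\aug(\ind^K_H(1))=|K:H|$ via the Mackey formula for $1\leq K\geq H$, deduce invertibility from $p\nmid|K:H|$, and then use the Frobenius axiom to identify $\ind^K_H\circ\res^K_H$ with multiplication by this unit. The only minor remark is that the triviality of $c_g$ on $\boldsymbol{A}(1)$ is most cleanly obtained from (\textbf{GF1}) (each $c_g$ is a $\k$-algebra homomorphism, hence sends $1$ to $1$), rather than (\textbf{MF3}).
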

\begin{proof}
(a) By the Mackey double coset formula for the subgroups
$1\leq K\geq H$,
\begin{align*}
\aug(\ind^K_H(1)) = \res^K_1\ind^K_H(1)
   &= \sum_{k\:1\backslash K/H}\ind^1_1\mathrm{c}_k\res^H_1(1) \\
   &= \sum_{k\:1\backslash K/H}1 = |K:H|.
\end{align*}
Thus
\begin{equation}\label{eq:ind(1)}
\ind^K_H(1)-|K:H|\in\mathfrak{m}(K)=\rad\boldsymbol{A}(K),
\end{equation}
so if $p\nmid|K:H|$, $\ind^K_H(1)\in\boldsymbol{A}(K)^\times$. \\
(b) Consider $\ind^K_H\res^K_H\:\boldsymbol{A}(K)\lra\boldsymbol{A}(K)$.
For $x\in\boldsymbol{A}(K)$, the Frobenius axiom gives
\[
x\ind^K_H(1) = \ind^K_H\res^K_H(x) = \ind^K_H(1)x.
\]
By (a) $\ind^K_H(1)$ is invertible in $\boldsymbol{A}(K)$ so
we can use $\ind^K_H(1)^{-1}$ to show that $\ind^K_H\res^K_H$
splits as a left or right $\boldsymbol{A}(K)$-module homomorphism.
\end{proof}

We also have the following.
\begin{prop}\label{prop:ind^G_1}
Let $G$ be a finite group. \\
\emph{(a)} $0\neq\ind^G_1(1)\in\soc\boldsymbol{A}(G)$. \\
\emph{(b)} If\/ $p\nmid|G|$, then\/ $\ind^G_1(1)\in\boldsymbol{A}(G)^\times$. \\

\end{prop}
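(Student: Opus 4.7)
The plan is that both parts follow quickly from the machinery already assembled: the Frobenius axiom, the calculation of $\mathfrak{m}(G)=\ker\aug$ from \eqref{eq:residue=k}, and Lemma~\ref{lem:res-p'}.

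For part~(a), the non-vanishing $\ind^G_1(1)\neq 0$ is immediate from Assumption~(\textbf{B}), so the real content is to show $\ind^G_1(1)\in\soc\boldsymbol{A}(G)$. Since $\boldsymbol{A}(G)$ is a local Artinian $\k$-algebra with maximal ideal $\mathfrak{m}(G)$, its socle coincides with the two-sided annihilator of $\mathfrak{m}(G)$. The key observation is that $\res^G_1=\aug$ and $\ker\aug=\mathfrak{m}(G)$, so any $z\in\mathfrak{m}(G)$ lies in $\ker\res^G_1$. I would then invoke \eqref{eq:relativesoclecondition} (equivalently, apply the Frobenius axiom directly with $H=G$, $K=1$: $\ind^G_1(1)\cdot z = \ind^G_1(\res^G_1(z)) = \ind^G_1(0) = 0$, and similarly on the other side) to conclude $z\cdot\ind^G_1(1)=0=\ind^G_1(1)\cdot z$. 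Hence $\ind^G_1(1)$ is annihilated by $\mathfrak{m}(G)$ on both sides and therefore lies in $\soc\boldsymbol{A}(G)$.

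For part~(b), I would just apply Lemma~\ref{lem:res-p'}(a) with $K=G$ and $H=1$: the index $|K:H|=|G|$ is coprime to $p$ by hypothesis, so $\ind^G_1(1)\in\boldsymbol{A}(G)^\times$.

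No serious obstacle is expected; the only thing to be careful about is recording the bilateral version of the annihilation in (a), which requires applying both forms of the Frobenius axiom (\textbf{GF2}) — or equivalently, noting that the left and right $\boldsymbol{A}(G)$-module structures on $\boldsymbol{A}(1)=\k$ induced by $\res^G_1$ coincide because $\boldsymbol{A}(1)$ is commutative.
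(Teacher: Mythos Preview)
Your proposal is correct and follows essentially the same approach as the paper: for (a) you use $\mathfrak{m}(G)=\ker\res^G_1$ together with \eqref{eq:relativesoclecondition} (the Frobenius axiom) to show $\ind^G_1(1)$ is annihilated on both sides by $\mathfrak{m}(G)$, and for (b) you invoke Lemma~\ref{lem:res-p'}(a) with $H=1$, $K=G$. The paper's proof is exactly this, stated more tersely.
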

\begin{proof}
(a) Suppose that
\[
z\in\rad\boldsymbol{A}(G)=\mathfrak{m}(G)=\ker\res^G_1.
\]
By~\eqref{eq:relativesoclecondition}, $z\ind^G_1(1) = 0 = \ind^G_1(1)z$,
so $\ind^G_1(1)\in\soc\boldsymbol{A}(G)$.   \\
(b) This follows from Lemma~\ref{lem:res-p'}(a).
\end{proof}

We also record
\begin{prop}\label{prop:ind^G_P}
If\/ $P$ is a $p$-Sylow subgroup of $G$, then
\[
\res^G_P(\ind^G_1(1)) = |G:P|\ind^P_1(1).
\]
Hence \emph{\textbf{Assumption (B)}} holds for
all finite groups if and only if it holds for
all finite $p$-groups.
\end{prop}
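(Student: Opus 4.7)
The plan is to apply the Mackey double coset formula directly to the composition $\res^G_P\ind^G_1$. Taking $L=P$ and $K=1$ in \textbf{(MF5)} (or the globally defined version \textbf{(GD5)}), the double cosets $P\backslash G/1$ are just the right cosets of $P$ in $G$, of which there are $|G:P|$. For each double coset representative $g$, the relevant subgroups $P\cap g\cdot 1\cdot g^{-1}$ and $g^{-1}Pg\cap 1$ are both trivial, and $c_g\:\boldsymbol{A}(1)\lra\boldsymbol{A}(1)$ is the identity by \textbf{(MF1)} (or \textbf{(GD2)}). Hence every summand collapses to $\ind^P_1(1)$, yielding the stated formula $\res^G_P(\ind^G_1(1))=|G:P|\ind^P_1(1)$.

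For the equivalence, one direction is trivial: if \textbf{(B)} holds for all finite groups then in particular for all finite $p$-groups. For the converse, suppose \textbf{(B)} holds for all finite $p$-groups, let $G$ be an arbitrary finite group, and let $P\leq G$ be a $p$-Sylow. By hypothesis $\ind^P_1(1)\neq0$ in $\boldsymbol{A}(P)$. Since $p\nmid|G:P|$ by definition of Sylow subgroup, the integer $|G:P|$ is a unit in $\k$ (this covers both $\Char\k=p>0$ and $\Char\k=0$). Therefore $|G:P|\ind^P_1(1)\neq0$, and by the formula just proved $\res^G_P(\ind^G_1(1))\neq 0$, which forces $\ind^G_1(1)\neq 0$ in $\boldsymbol{A}(G)$.

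The only step with any content is the first: verifying that the Mackey formula really does collapse as claimed. This is essentially bookkeeping, and no serious obstacle is anticipated — one just has to be careful that the conventions in \textbf{(GD5)} (with $K$ and $L$ playing the roles of the two subgroups) match the ordering used here, and that restricting $1\in\boldsymbol{A}(1)$ along the identity on the trivial group gives $1$. The one small subtlety worth flagging explicitly is the unit assertion for $|G:P|$ in the characteristic-$p$ case, which is exactly the reason the Sylow hypothesis is used and which mirrors the argument in Lemma~\ref{lem:res-p'}(a).
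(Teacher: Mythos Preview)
Your proof is correct and follows essentially the same route as the paper: apply the Mackey formula \textbf{(MF5)} with $L=P$ and $K=1$ so that every summand reduces to $\ind^P_1(1)$, and then read off the equivalence for Assumption~(B) from the formula together with the fact that $|G:P|$ is a unit in~$\k$. The paper leaves the second part as ``obvious'' while you spell it out, but the substance is identical.
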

\begin{proof}
The Mackey formula \textbf{(MF5)} gives
\begin{align*}
\res^G_P\ind^G_1(1) &=
\sum_{g\:P\backslash G/1}\ind^P_1\circ c_g\circ\res^1_1(1) \\
&= \sum_{g\:P\backslash G/1}\ind^P_1(1) \\
&= |G:P|\ind^P_1(1) \neq 0.
\end{align*}
The conclusion about \textbf{Assumption (B)} is
obvious.
\end{proof}

We can also deduce some useful results on induction
when the prime~$p$ divides the index.
\begin{prop}\label{prop:ind-pindex}
Suppose that $H\leq G$ and $p\mid|G:H|$. Then for
$z\in\boldsymbol{A}(H)$,
\[
\ind^G_H(z)\in\mathfrak{m}(G).
\]
\end{prop}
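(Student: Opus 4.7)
The plan is to show that $\ind^G_H(z)$ lies in $\ker \aug = \mathfrak{m}(G)$, since $\boldsymbol{A}(G)$ is local and $\mathfrak{m}(G)$ is precisely the kernel of $\aug = \res^G_1$ by~\eqref{eq:residue=k}. So it suffices to compute $\aug(\ind^G_H(z))$ and show it vanishes in $\k$.

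To evaluate $\res^G_1 \ind^G_H(z)$, I would apply the Mackey double coset formula \textbf{(MF5)} to the pair of subgroups $1 \leq G \geq H$. The double cosets $1 \backslash G / H$ are just the ordinary right cosets $G/H$, so the sum is indexed by a transversal of size $|G:H|$. For each representative $g$, the intersection $1 \cap gHg^{-1}$ is trivial and likewise $g^{-1} \cdot 1 \cdot g \cap H = 1$, so each summand reduces to $c_g \res^H_1(z) = c_g \aug(z)$; and since $c_g$ acts on $\boldsymbol{A}(1) = \k$ as the identity (it is induced by an inner automorphism of the trivial group), this is just $\aug(z) \in \k$.

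Therefore
\[
\aug\bigl(\ind^G_H(z)\bigr) \;=\; \sum_{g \,:\, 1\backslash G/H} \aug(z) \;=\; |G:H|\,\aug(z) \;\in\; \k.
\]
Under the hypothesis $p \mid |G:H|$, the integer $|G:H|$ vanishes in $\k$ because $\Char \k = p$, so $\aug(\ind^G_H(z)) = 0$, \ie $\ind^G_H(z) \in \mathfrak{m}(G)$.

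The argument is really a one-line calculation once the right tool is identified; there is no significant obstacle, and the only potential subtlety is keeping track of the conjugation terms $c_g$ on the trivial subgroup, which are forced to be the identity by axiom \textbf{(MF1)} (or equivalently \textbf{(GD2)} in the globally defined setting).
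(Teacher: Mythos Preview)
Your proof is correct and is essentially identical to the paper's own proof: both compute $\res^G_1\ind^G_H(z)$ via the Mackey formula \textbf{(MF5)} for the pair $1\leq G\geq H$, obtain $|G:H|\,\res^H_1(z)$, and conclude it vanishes in~$\k$ since $p\mid|G:H|$. The only difference is that you spell out slightly more detail about the conjugation terms and the identification of double cosets with right cosets.
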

\begin{proof}
By the Mackey formula,
\begin{align*}
\res^G_1\circ\ind^G_H(z)
&= \sum_{g\:1\backslash G/H} \ind^1_1\circ c_g\circ\res^H_1(z) \\
&= \sum_{g\:1\backslash G/H} \res^H_1(z) \\
&= |G:H|\res^H_1(z) = 0.
\end{align*}
So $\ind^G_H(z)\in\mathfrak{m}(G)$.
\end{proof}

Here is another important consequence of Lemma~\ref{lem:res-p'}(b)
and Assumption~(A). For a converse, see Theorem~\ref{thm:non-triviality}
which depends on more assumptions on the Green functor $\boldsymbol{A}$.
\begin{prop}\label{prop:p'-groups}
Suppose that $G$ is a group for which $p\nmid|G|$. Then $\boldsymbol{A}(G)=\k$.
\end{prop}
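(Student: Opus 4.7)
The plan is to apply Lemma~\ref{lem:res-p'}(b) with $H=1$ and $K=G$. Since $p\nmid|G|=|G:1|$, that lemma tells us that
\[
\res^G_1\: \boldsymbol{A}(G)\lra \boldsymbol{A}(1) = \k
\]
is a split monomorphism, in particular injective.

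Now recall from~\eqref{eq:residue=k} (which follows from Assumption~(\textbf{A}) and functoriality applied to $1\hookrightarrow G\twoheadrightarrow 1$) that $\res^G_1=\aug$ and $\ker\aug=\mathfrak{m}(G)$. Combining this with the injectivity just established gives $\mathfrak{m}(G)=0$, so
\[
\boldsymbol{A}(G)=\boldsymbol{A}(G)/\mathfrak{m}(G)=\k,
\]
as required.

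There is really no main obstacle: everything has been put in place by Assumption~(\textbf{A}), the computation of the augmentation kernel, and Lemma~\ref{lem:res-p'}(b); the proposition is essentially a one-line corollary. The only thing worth being careful about is that Lemma~\ref{lem:res-p'}(b) is stated for subgroup inclusions $H\leq K$, so one must explicitly take $H=1$ and use Assumption~(\textbf{A}) to identify the codomain with $\k$ before concluding.
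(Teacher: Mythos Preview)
Your proof is correct and is essentially the approach the paper intends: it explicitly cites Lemma~\ref{lem:res-p'}(b) together with Assumption~(A) as the ingredients, and you have simply spelled out how they combine. One could shorten the conclusion slightly by noting that a split monomorphism from $\boldsymbol{A}(G)$ into the one-dimensional space~$\k$ forces $\boldsymbol{A}(G)\iso\k$ directly, without passing through~$\mathfrak{m}(G)$, but your version is equally valid.
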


%\begin{Assump}[C$'$]
%Suppose that $N\ideal G$ and $p\nmid|N|$, and $\pi\:G\lra G/N$
%is the quotient homomorphism. Then $\pi$ induces an isomorphism
%\[
%\pi^*\:\boldsymbol{A}(G/N)\xrightarrow{\;\iso\;}\boldsymbol{A}(G).
%\]
%\end{Assump}

%\begin{rem}\label{rem:Assump(C)}
%In the situation of the assumption, suppose that $p>0$ and $P\leq G/N$
%is a $p$-Sylow subgroup. Then the pullback square
%\[
%\xymatrix{
%Q\ar@{.>}[dr]\ar@{.>}[drr]^{\iso}
%%\ar@{.>}[dr]
%                &&          \\
%  & \tilde{P}\ar[r]\ar[d]\ar@{}[dr]|<<<<<{\PO}
%   & P\ar[d] \\
%& G\ar[r] &  G/N
%}
%\]
%defines a subgroup $\tilde{P}\leq G$ containing $N$ and of
%order $|P|\,|N|$. Let $Q\leq\tilde{P}\leq G$ be a $p$-Sylow
%subgroup of $\tilde{P}$ (and so of $G$). Then $\tilde{P}$ is
%a $p$-nilpotent group with $\tilde{P}=Q\ltimes N$, and the
%homomorphism $Q\lra P$ is an isomorphism. By a theorem of
%Tate~\cite{Tate:p-nilpotent}, the inclusion $Q\lra\tilde{P}$
%induces an isomorphism
%\[
%\mathrm{H}^*(\tilde{P};\F_p) \xrightarrow{\;\iso\;}\mathrm{H}^*(Q;\F_p)
%\]
%and so there is an induced isomorphism
%\[
%\mathrm{H}^*(P;\F_p) \xrightarrow{\;\iso\;}\mathrm{H}^*(\tilde{P};\F_p).
%\]
%\end{rem}

%This assumption allows us to define $\alpha_*=(\alpha^*)^{-1}$
%for any such homomorphism $\alpha\:G\lra G/N$, and more generally
%any homomorphism $\beta\:G\lra H$ with $\boldsymbol{A}(\ker\beta)=\k$.

Our next result is crucial in identifying when a Green functor
can be extended to a globally defined Green functor.
\begin{lem}\label{lem:G->G/N}
Suppose that $\Char\k=p>0$. \\
\emph{(a)} Suppose that $P$ is a $p$-group, and that $N$ is a
group for which $p\nmid|N|$ and $P$ acts on $N$ by automorphisms,
so the semi-direct product $PN = P\ltimes N$ is defined. Then
the inclusion $P\lra PN$ induces an isomorphism
\[
\res^{PN}_P\:\boldsymbol{A}(PN)\xrightarrow{\;\iso\;}\boldsymbol{A}(P).
\]
\emph{(b)} Suppose that $K\ideal G$ where $p\nmid|K|$. Then the
quotient epimorphism $\pi\:G\lra G/K$ induces an isomorphism
\[
\pi^*\:\boldsymbol{A}(G/K)\xrightarrow{\;\iso\;}\boldsymbol{A}(G).
\]
\end{lem}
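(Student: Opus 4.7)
My plan is to handle (a) first, then bootstrap to (b) using (a) together with a $p$-Sylow argument. For (a), the essential observation is that the semidirect product structure supplies an honest group epimorphism $\pi\colon PN \to P$ (the projection $pn \mapsto p$, which is a homomorphism precisely because $N$ is normal) splitting the inclusion $\iota\colon P \hookrightarrow PN$, so $\pi \circ \iota = \id_P$. Since $N \in \mathcal{X}$, the inflation $\pi^*\colon \boldsymbol{A}(P) \to \boldsymbol{A}(PN)$ is defined in our globally defined Green functor, and axiom \textbf{(GD1)} yields $\iota^* \pi^* = (\pi \iota)^* = \id$. Hence $\res^{PN}_P = \iota^*$ is surjective. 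Since $[PN:P] = |N|$ is coprime to $p$, Lemma~\ref{lem:res-p'}(b) shows it is also a split monomorphism, so $\res^{PN}_P$ is an isomorphism.

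For (b), I pass through a $p$-Sylow. Pick $P \leq G$ a $p$-Sylow; since $p \nmid |K|$, $P \cap K = 1$, so $PK = P \ltimes K$ is a subgroup of $G$, and $P' := PK/K \cong P$ is a $p$-Sylow of $G/K$ with both $[G:PK]$ and $[G/K:P']$ coprime to $p$. Applying part (a) to the subgroup inclusion $P \leq PK$, and then to the quotient $\pi|_{PK}\colon PK \twoheadrightarrow P'$ (using the group isomorphism $\pi|_P\colon P \xrightarrow{\sim} P'$), both $\res^{PK}_P$ and $(\pi|_{PK})^*$ become isomorphisms.

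The square
\[
\xymatrix{
PK \ar[r]^{\iota^G_{PK}} \ar[d]_{\pi|_{PK}} & G \ar[d]^{\pi} \\
P' \ar[r]_{\iota^{G/K}_{P'}} & G/K
}
\]
is a pullback because $\pi^{-1}(P') = PK$, so axiom \textbf{(GD3)} (both vertical kernels equal $K \in \mathcal{X}$) supplies the identity
\[
\pi^* \circ \ind^{G/K}_{P'} = \ind^G_{PK} \circ (\pi|_{PK})^*.
\]
Injectivity of $\pi^*$ drops out by restricting to $P$: the composite $\res^G_P \circ \pi^* = (\pi|_P)^* \circ \res^{G/K}_{P'}$ is an isomorphism followed by a split monomorphism, hence injective.

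The main hurdle is surjectivity. Given $x \in \boldsymbol{A}(G)$, use the isomorphism $(\pi|_{PK})^*$ to lift $\res^G_{PK}(x)$ to some $z \in \boldsymbol{A}(P')$. Combining the displayed identity with the Frobenius axiom then yields $\pi^*(\ind^{G/K}_{P'}(z)) = \ind^G_{PK}(1) \cdot x$. Setting $u := \ind^G_{PK}(1)$ and $v := \ind^{G/K}_{P'}(1)$, Lemma~\ref{lem:res-p'}(a) makes both into units in their respective algebras, and specialising the displayed identity to $z = 1$ shows $\pi^*(v) = u$. Since $\pi^*$ is a ring homomorphism, $\pi^*(v^{-1}) = u^{-1}$, so $x = \pi^*\bigl(v^{-1}\ind^{G/K}_{P'}(z)\bigr)$, and surjectivity follows.
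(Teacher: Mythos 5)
Your proof is correct and follows essentially the same route as the paper's: part~(a) via the split retraction $PN\twoheadrightarrow P$ together with Lemma~\ref{lem:res-p'}(b), and part~(b) via a $p$-Sylow reduction to the subgroup $PK\leq G$ and its image $P'\leq G/K$, using (a) to identify $\boldsymbol{A}(PK)$ with $\boldsymbol{A}(P')$ and then the split mono/epi structure on restrictions and inductions. Your explicit appeal to axiom~(GD3) for the identity $\pi^*\circ\ind^{G/K}_{P'}=\ind^G_{PK}\circ(\pi|_{PK})^*$, followed by the unit computation with $u=\ind^G_{PK}(1)$ and $v=\ind^{G/K}_{P'}(1)$, fills in precisely the surjectivity step that the paper's commutative diagram leaves implicit in its terse ``from which it follows.''
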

\begin{proof}
(a) Since
\[
|PN:P| = |N/P\cap N| = |N|
\]
and $PN/N\iso P$, Lemma~\ref{lem:res-p'} gives a commutative
diagram
\[
\xymatrix{
\boldsymbol{A}(PN/N)\ar[d]_{\res^{PN/N}_{PN}}\ar@/^15pt/[drr]^(.6){\iso}
                 &&  \\
\boldsymbol{A}(PN)\ar@{ >->}[rr]^(.57){\res^{PN}_P}
   && \boldsymbol{A}(P)\ar@{->>}@/^17pt/[ll]^(.43){\ind^{PN}_P}
}
\]
where $\ind^{PN}_P\res^{PN}_P$ is the identity. It easily follows
that $\res^{PN/N}_{PN}$ is both monic and epic, hence it is an
isomorphism. \\
(b) Let $Q\leq G/K$ be a $p$-Sylow subgroup. The pullback square
\[
\xymatrix{
  & \tilde{Q}\ar[r]\ar[d]
   & Q\ar[d] \\
& G\ar[r] & G/K\ar@{}[ul]|<<<<<<{\PB}
}
\]
defines a subgroup $\tilde{Q}\leq G$ containing $K$ and of order
$|Q|\,|K|$. Let $P\leq\tilde{Q}\leq G$ be a $p$-Sylow subgroup
of $\tilde{Q}$ (and so of~$G$). Then $\tilde{Q}$ is a $p$-nilpotent
group with
\[
\tilde{Q}=PK=P\ltimes K,
\]
and the composition
\[
P\xrightarrow{\;\inc\;}PK=\tilde{Q}\xrightarrow{\;\quo\;} Q
\]
is an isomorphism. By (a), the inclusion induces an isomorphism
\[
\boldsymbol{A}(\tilde{Q})
\xrightarrow[\iso]{\;\inc^*=\res^{\tilde{Q}}_P\;}\boldsymbol{A}(P),
\]
hence $\quo^*\:\boldsymbol{A}(Q)\lra\boldsymbol{A}(\tilde{Q})=\boldsymbol{A}(PK)$
is also an isomorphism. Now we have a commutative diagram
\[
\xymatrix{
\boldsymbol{A}(G/K)\ar[rr]^{\pi^*}\ar@{ >->}[dd]^{\res^{G/K}_Q}
    && \boldsymbol{A}(G)\ar@{ >->}[dd]_{\res^{G}_{PK}}  \\
&& \\
\boldsymbol{A}(Q)\ar[rr]^{\quo^*}_{\iso}\ar@/^17pt/@{->>}[uu]^{\ind^{G/K}_Q}
   && \boldsymbol{A}(PK)\ar@/_17pt/@{->>}[uu]_{\ind^{G}_{PK}}
}
\]
from which it follows that $\pi^*$ is monic and epic, hence
it is an isomorphism.
\end{proof}

This result allows us to define $\alpha_*=(\alpha^*)^{-1}$
for any such homomorphism $\alpha\:G\lra G/K$, and more
generally any homomorphism $\beta\:G\lra H$ with
$p\nmid|\ker\beta|$. Thus we have the following extension
result.
\begin{thm}\label{thm:localArt-extn}
There is a unique extension of $\boldsymbol{A}$ to a globally
defined local Artinian Green functor for the pair
$\mathcal{X},\mathcal{Y}'$, where $\mathcal{Y}'$ consists
of all finite groups of order not divisible by~$p$.
\end{thm}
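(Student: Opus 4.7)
The plan is to define the extension using the canonical factorization through the image together with Lemma~\ref{lem:G->G/N}(b), then to verify the axioms of a globally defined Green functor. Any homomorphism $\beta\colon K \to L$ with $\ker\beta \in \mathcal{Y}'$ factors uniquely as $\beta = \iota \circ \pi$, where $\pi\colon K \twoheadrightarrow K/\ker\beta$ is the quotient and $\iota\colon K/\ker\beta \hookrightarrow L$ is the inclusion of the image. Since $\ker\iota = 1$, the map $\iota_*$ is already provided by the given data; since $\ker\pi \in \mathcal{Y}'$, Lemma~\ref{lem:G->G/N}(b) tells us $\pi^*$ is an isomorphism, so I set
\[
\pi_* := (\pi^*)^{-1}, \qquad \beta_* := \iota_* \circ \pi_*.
\]

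For uniqueness, suppose $\boldsymbol{A}$ admits any extension to $(\mathcal{X}, \mathcal{Y}')$. Applying axiom (GD4) to the commutative square of epimorphisms
\[
\xymatrix{G \ar@{->>}[d]_\pi \ar@{->>}[r]^\pi & G/K \ar@{->>}[d]^{\id} \\ G/K \ar@{->>}[r]^{\id} & G/K}
\]
(whose two vertical kernels $K$ and $1$ lie in $\mathcal{Y}'$ and $\mathcal{X}$ respectively) forces $\pi_* \circ \pi^* = \id^* \circ \id_* = \id$, and invertibility of $\pi^*$ then forces $\pi_* = (\pi^*)^{-1}$. Axiom (GD1) applied to the factorization $\beta = \iota \circ \pi$ determines $\beta_* = \iota_* \circ \pi_*$ in general, so the extension is unique.

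For existence I verify axioms (GD1)--(GD7) for this extended structure. Axioms (GD2), (GD5), (GD6) involve only restrictions and subgroup-inclusion inductions and are inherited unchanged. The Frobenius axiom (GD7) holds for each new $\pi_*$ because $\pi^*$ is an algebra isomorphism, so its inverse $\pi_* = (\pi^*)^{-1}$ is an algebra isomorphism too, and hence a bimodule map over $\boldsymbol{A}(G/K)$; composing with $\iota_*$, which already satisfies Frobenius by (GF2), delivers (GD7) for every new $\beta_*$. For axioms (GD1) and (GD3) on the new maps, case analysis reduces matters to the known axioms: quotient--quotient compositions are handled directly by inverting $(\pi_2 \pi_1)^* = \pi_1^* \pi_2^*$, while the essential mixed case---a quotient $\pi\colon G \twoheadrightarrow G/N$ following an inclusion $\iota_1\colon H \hookrightarrow G$---is handled by applying (GD3) case~2 from the original data to the pullback square
\[
\xymatrix{HN \ar@{ >->}[r] \ar@{->>}[d] & G \ar@{->>}[d]^\pi \\ HN/N \ar@{ >->}[r] & G/N,}
\]
together with the second-isomorphism identification $H/(H\cap N) \cong HN/N$; pullback compatibility for (GD3) itself is verified analogously by decomposing each new epimorphism through its kernel quotient and invoking (GD3) for the original data.

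The main obstacle is the bookkeeping in the mixed-case reductions, where one transposes a new $(-)_*$ across $(-)^*$ operations and reorganises the resulting identity into a pullback relation among original data. Lemma~\ref{lem:G->G/N}(b) is what makes the whole strategy work: by turning every quotient by a normal $p'$-subgroup into an invertible transformation on $\boldsymbol{A}$, it ensures that the new $\pi_*$'s can be freely moved to the other side of any identity, so each axiom check terminates in an identity already guaranteed by the input globally defined Green functor structure.
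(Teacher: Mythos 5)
Your proposal follows exactly the route the paper indicates in the sentence immediately preceding the theorem: define $\pi_* := (\pi^*)^{-1}$ for a quotient $\pi$ by a $p'$-normal subgroup, using Lemma~\ref{lem:G->G/N}(b), then extend to arbitrary $\beta$ with $p'$-kernel via the epi--mono factorization $\beta = \iota\circ\pi$. The paper leaves the verification of the globally defined Green functor axioms entirely implicit, whereas you spell out a uniqueness argument via (GD4) and sketch the axiom checks; this is filling in detail the paper omits rather than taking a different path.
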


We now introduce a third condition which is suggested by this
result. When $\boldsymbol{A}(G)$ is Morava $K$-theory of $BG$
at the prime~$p$, this condition is automatic since the groups
for which $\boldsymbol{A}(G)=\k$ are precisely those of order
not divisible by~$p$. More generally, Theorem~\ref{thm:non-triviality}
provides conditions under which this holds. It is not clear if
this always holds for local Artinian Green functors satisfying
Assumptions~(A) and~(B).
\begin{Assump}[\textbf{C}]
Suppose that $K\ideal G$ and $\boldsymbol{A}(K)=\k$, and $\pi\:G\lra G/K$
is the quotient homomorphism. Then $\pi$ induces an isomorphism
$\pi^*\:\boldsymbol{A}(G/K)\xrightarrow{\;\iso\;}\boldsymbol{A}(G)$.
\end{Assump}

%\begin{thm}\label{thm:Triviality}
%Suppose that $\boldsymbol{A}$ is a local Artinian Green functor
%satisfying \emph{Assumptions (A)} and \emph{(B)}. Then
%\[
%\boldsymbol{A}(G) = \k \IFF p\nmid|G|.
%\]
%\end{thm}
%\begin{proof}
%By Proposition~\ref{prop:p'-groups} it suffices to show that
%if $0<p\mid|G|$ then $\boldsymbol{A}(G)\neq\k$.
%
%Assuming $0<p\mid|G|$, consider $0\neq\ind^G_1(1)\in\boldsymbol{A}(G)$.
%If $P\leq G$ is a $p$-Sylow subgroup,
%\end{proof}

Now we give a result on the effect of automorphisms of~$G$
on the socle of $\boldsymbol{A}(G)$; when $\boldsymbol{A}(G)$
is a Frobenius algebra (see Section~\ref{sec:Frobenius} for
details), $\dim_\k\soc\boldsymbol{A}(G)$ is $1$-dimensional,
and the full strength of this applies.
\begin{prop}\label{prop:auto-soc}
Suppose that $\Char p>0$. Let $G$ be a finite group and
let $\alpha\:G\lra H$ be an isomorphism. Then
\[
\alpha^*\ind^H_1(1) = \ind^G_1(1).
\]
In particular, if\/ $H=G$ so $\alpha$ is an automorphism
and\/ $\dim_\k\soc\boldsymbol{A}(G)=1$, then
$\alpha^*\:\boldsymbol{A}(G)\lra\boldsymbol{A}(G)$ restricts
to the identity function on $\soc\boldsymbol{A}(G)$.
\end{prop}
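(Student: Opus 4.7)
The plan is to deduce the socle statement as an easy corollary of the more general identity $\alpha^*\ind^H_1(1) = \ind^G_1(1)$, which itself falls out of the pullback axiom for globally defined Green functors.

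For the first assertion I would set up the commutative square
\[
\xymatrix{
1\ar[rr]^{\id}\ar[dd]_{\iota^G_1} && 1\ar[dd]^{\iota^H_1} \\
&& \\
G\ar[rr]^\alpha && H
}
\]
and observe that, because $\alpha$ is an isomorphism, this square is a pullback: the fibre product of $\iota^H_1$ and $\alpha$ consists of those $g\in G$ with $\alpha(g)=1$, which is just the trivial subgroup of $G$. Both vertical maps, $\iota^G_1$ and $\iota^H_1$, have trivial kernel, so the kernel conditions of (\textbf{GD3}) are satisfied with this common kernel lying in $\mathcal{X}$. Applying the second clause of (\textbf{GD3}) to this square yields $\alpha^*(\iota^H_1)_* = (\iota^G_1)_*\,\id^*$, i.e.\ $\alpha^*\ind^H_1 = \ind^G_1$ as maps $\k = \boldsymbol{A}(1) \to \boldsymbol{A}(G)$. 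Evaluating at $1\in\k$ gives the required identity $\alpha^*\ind^H_1(1) = \ind^G_1(1)$.

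For the second assertion, specialise to $H=G$ with $\alpha$ an automorphism. By Proposition~\ref{prop:ind^G_1}(a), the element $\ind^G_1(1)$ is a nonzero element of $\soc\boldsymbol{A}(G)$; under the hypothesis $\dim_\k\soc\boldsymbol{A}(G)=1$ it therefore spans the socle. Since $\alpha^*$ is a $\k$-algebra automorphism of $\boldsymbol{A}(G)$, it preserves the radical, hence also the socle, and it acts $\k$-linearly. The first part tells us that $\alpha^*$ fixes the generator $\ind^G_1(1)$, and by $\k$-linearity it must then fix the whole one-dimensional subspace $\soc\boldsymbol{A}(G)$ pointwise.

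The only real subtlety lies in verifying the pullback condition for (\textbf{GD3}); there is no serious obstacle beyond checking that the mono/epi decoration in the axiom diagram is flexible enough to accommodate a square whose horizontal map is an isomorphism (and therefore simultaneously mono and epi). All of the substantive content is concentrated in step one — once $\alpha^*\ind^H_1(1) = \ind^G_1(1)$ is in hand, the socle statement is purely formal, using only that $\soc\boldsymbol{A}(G)$ is one-dimensional and contains the nonzero element $\ind^G_1(1)$ supplied by Proposition~\ref{prop:ind^G_1}.
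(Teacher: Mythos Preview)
Your approach is essentially the paper's, but the way you invoke (\textbf{GD3}) is slightly off. In (\textbf{GD3}) as stated, the vertical arrows are epimorphisms and the horizontal arrows are monomorphisms. In your square the vertical arrows are the inclusions $\iota^G_1,\iota^H_1$, which are not epimorphisms, so the square does not literally fit the template; your closing remark locates the decoration issue at the horizontal isomorphism $\alpha$, but the actual mismatch is with the verticals. Moreover, the identity you write down, $\alpha^*(\iota^H_1)_*=(\iota^G_1)_*\id^*$, is the \emph{first} clause $\delta^*\alpha_*=\beta_*\gamma^*$ (kernel in $\mathcal{Y}$) under your labelling, not the second clause you cite.

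The fix is simply to transpose your square, which is exactly what the paper does: take the pullback
\[
\xymatrix{
1\ar[d]_{=}\ar[r] & G\ar[d]^{\alpha}_{\iso} \\
1\ar[r] & H
}
\]
with $\alpha$ (an isomorphism, hence an epimorphism) as the right vertical and the inclusions $1\to G$, $1\to H$ as the horizontal monomorphisms. Now the shape matches (\textbf{GD3}) on the nose, $\ker\alpha=\ker(\id_1)=1\in\mathcal{X}$, and the second clause gives $\alpha^*\ind^H_1=\ind^G_1$ directly. Your treatment of the socle statement is fine and matches the paper's.
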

\begin{proof}
Taking $\mathcal{X}$ and $\mathcal{Y}$ to consist of all
finite groups and all trivial groups respectively, we can
apply (GD3) to the diagram
\[
\xymatrix{
1\ar[d]_{=}\ar[r] & G\ar[d]^{\alpha}_{\iso} \\
%  && \\
1\ar[r] & H
}
\]
to obtain
\[
\alpha^*\ind^H_1(1) = \ind^G_1(1).
\]

If $\alpha$ is an automorphism, it induces a $\k$-algebra
automorphism $\alpha^*\:\boldsymbol{A}(G)\lra\boldsymbol{A}(G)$
which restricts to an automorphism
$\soc\boldsymbol{A}(G)\lra\soc\boldsymbol{A}(G)$. If
$\soc\boldsymbol{A}(G)=1$ is $1$-dimensional, then by
Proposition~\ref{prop:ind^G_1}(a) it is spanned by
$\ind^G_1(1)$ which is fixed by~$\alpha^*$.
\end{proof}

An automorphism of order $p^e$ must act as the identity
on the $1$-dimensional vector space $\soc\boldsymbol{A}(G)$,
but an automorphism of order not divisible by~$p$ might
be expected to act non-trivially.

\section{The stable elements formula}\label{sec:StableElts}

It is well-understood that analogues of the classic stable
elements formula of Cartan \& Eilenberg~\cite{C&E} often applies
to compute Mackey functors. The very accessible introduction
of Webb~\cite{PW:MackeyGuide}*{section~3} provides the necessary
background material which we will use. We have all the necessary
ingredients for such a result when $\boldsymbol{A}$ is a local
Artinian Green functor which satisfies both of Assumptions~(A)
and~(B).

We assume that $\Char\k=p>0$ and denote by $\mathcal{P}$ the
collection of all non-trivial finite $p$-groups.

First note that for any finite group and a $p$-Sylow subgroup
$P\leq G$, the induction homomorphism
$\ind^G_P\:\boldsymbol{A}(P)\lra\boldsymbol{A}(G)$ is surjective
by Lemma~\ref{lem:res-p'}(b).
\begin{prop}\label{prop:A-projective}
The Mackey functor $\boldsymbol{A}$ on the subgroups of\/ $G$
is $\mathcal{P}$-projective.
\end{prop}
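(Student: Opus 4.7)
The plan is to invoke the standard criterion for $\mathcal{P}$-projectivity of a Green functor (a Higman-type criterion, see \cite{PW:MackeyGuide}*{section~8}): $\boldsymbol{A}$ on the subgroups of $G$ is $\mathcal{P}$-projective precisely when for each subgroup $K\leq G$, the unit $1\in\boldsymbol{A}(K)$ lies in the image of the combined induction map $\bigoplus_{P\in\mathcal{P},\,P\leq K}\ind^K_P$. So the task reduces to producing, for each $K\leq G$ with $p\mid|K|$, a $p$-subgroup $P\leq K$ and an element $y\in\boldsymbol{A}(P)$ for which $\ind^K_P(y)=1$.

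For such a $K$, I would pick a $p$-Sylow subgroup $P\leq K$. By Lemma~\ref{lem:res-p'}(a) applied to $P\leq K$, the element $\ind^K_P(1)\in\boldsymbol{A}(K)$ is a unit; set $u=\ind^K_P(1)^{-1}$. Then the Frobenius axiom (GF2) with $x=1\in\boldsymbol{A}(P)$ and $y=u$ gives
\[
\ind^K_P\bigl(\res^K_P(u)\bigr)=\ind^K_P(1)\cdot u=1.
\]
Thus $1\in\im(\ind^K_P)$ with $P\in\mathcal{P}$, which is exactly what is required.

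For the edge case where $p\nmid|K|$, Proposition~\ref{prop:p'-groups} forces $\boldsymbol{A}(K)=\k$, and one checks that such a summand of $\boldsymbol{A}$ is automatically $\mathcal{P}$-projective (it is supported entirely away from the relevant orbit, or alternatively one enlarges $\mathcal{P}$ to include the trivial group where needed; Webb's set-up handles this uniformly). The argument is short and I do not expect a substantial obstacle: the whole content is already packaged in Lemma~\ref{lem:res-p'}(a) together with the Frobenius relation, which together say that $\ind^K_P\circ\res^K_P$ is an $\boldsymbol{A}(K)$-module isomorphism of $\boldsymbol{A}(K)$ onto itself via multiplication by the unit $\ind^K_P(1)$; in particular the identity of $\boldsymbol{A}(K)$ factors through $\boldsymbol{A}(P)$.
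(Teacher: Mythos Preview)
Your approach is essentially the same as the paper's, but you have over-stated the Higman criterion and thereby created an unnecessary edge case. For a Green functor $\boldsymbol{A}$ on the subgroups of $G$, Dress' criterion (see \cite{PW:MackeyGuide}*{theorem~3.4}, in section~3 rather than section~8) only requires that the coproduct of induction maps
\[
\bigoplus_{\substack{Q\leq G\\Q\in\mathcal{P}}}\boldsymbol{A}(Q)\lra\boldsymbol{A}(G)
\]
be surjective at the top group $G$; one does \emph{not} need $1\in\boldsymbol{A}(K)$ to be hit for every subgroup $K\leq G$. Indeed your version of the criterion is literally false whenever $p\nmid|K|$, since then $K$ contains no member of $\mathcal{P}$ at all, and this is precisely what forces your awkward ``edge case'' paragraph. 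Once the quantification over $K$ is dropped, your argument and the paper's coincide: for a $p$-Sylow $P\leq G$, Lemma~\ref{lem:res-p'} gives $\ind^G_P(1)\in\boldsymbol{A}(G)^\times$, so $\ind^G_P$ is already surjective, and Dress' theorem yields $\mathcal{P}$-projectivity.
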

\begin{proof}
The coproduct of the maps $\ind^G_Q$,
\[
(\ind^G_Q)_Q\:\bigoplus_{\substack{Q\leq G\\Q\in\mathcal{P}}}
                                \boldsymbol{A}(Q)\lra\boldsymbol{A}(G),
\]
is surjective since the factor $\ind^G_P$ corresponding to a
$p$-Sylow subgroup is. Now by Dress' theorem~\cite{Dress:Contrib},
see~\cite{PW:MackeyGuide}*{theorem~3.4}, $\boldsymbol{A}$ is a
$\mathcal{P}$-projective Mackey functor.
\end{proof}

Given this result, the theory of resolutions using Amitsur
complexes described in~\cite{PW:MackeyGuide}*{section~3}
can be applied. In particular, we can take the two diagrams
$\mathcal{D}^*,\mathcal{D}_*$ consisting of all morphisms
of the form

\[
\xymatrix{
\boldsymbol{A}(Q_1)\ar[ddr]_{c_g\circ\res^{Q_1}_{Q_1\cap g^{-1}Q_2g}}
& \mathcal{D}^* &
\boldsymbol{A}(Q_2)\ar[ddl]^{\res^{Q_2}_{gQ_1g^{-1}\cap Q_2}} \\
&  & \\
& \boldsymbol{A}(gQ_1g^{-1}\cap Q_2) &
}
\]

\bigskip
\[
\xymatrix{
& \boldsymbol{A}(gQ_1g^{-1}\cap Q_2)
\ar[ddl]_{\ind^{Q_1}_{Q_1\cap g^{-1}Q_2g}\circ c_{g^{-1}}}
\ar[ddr]^{\ind^{Q_2}_{gQ_1g^{-1}\cap Q_2}\ph{abc}} &  \\
&  & \\
\boldsymbol{A}(Q_1) & \mathcal{D}_* & \boldsymbol{A}(Q_2)
}
\]

\bigskip
\noindent
where $Q_1,Q_2\leq G$ with $Q_1,Q_2\in\mathcal{P}$ and $g\in G$.

Then from proposition~(3.6) and corollary~(3.7) of~\cite{PW:MackeyGuide},
we obtain
\begin{prop}[Stable elements formulae]\label{prop:A-stabelts}
For a finite group $G$, $\boldsymbol{A}(G)$ can be computed
from each of the formulae
\[
\lim_{\mathcal{D}^*}\boldsymbol{A} = \boldsymbol{A}(G)
              = \colim_{\mathcal{D}_*}\boldsymbol{A}.
\]
\end{prop}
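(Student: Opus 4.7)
The plan is to reduce both formulae to the general framework of Amitsur-type resolutions for $\mathcal{P}$-projective Mackey functors developed in \cite{PW:MackeyGuide}*{section~3}, using the $\mathcal{P}$-projectivity already established in Proposition~\ref{prop:A-projective} as the single nontrivial input. Concretely, I will verify the limit formula first, then dualise.

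For the limit formula, the natural candidate for the isomorphism is
\[
\Phi\:\boldsymbol{A}(G)\lra\prod_{\substack{Q\leq G \\ Q\in\mathcal{P}}}\boldsymbol{A}(Q),
\qquad x\mapsto(\res^G_Q(x))_Q.
\]
I would check that $\Phi$ lands in $\lim_{\mathcal{D}^*}\boldsymbol{A}$: the two legs of a typical arrow in $\mathcal{D}^*$ differ by a conjugation followed by restriction to $gQ_1g^{-1}\cap Q_2$, so the equalizer condition for $\Phi(x)$ is immediate from axioms (MF2)--(MF4) applied to $\res^G_{Q_i}(x)$. Injectivity of $\Phi$ reduces to injectivity of $\res^G_P$ for a $p$-Sylow subgroup $P\leq G$, which is Lemma~\ref{lem:res-p'}(b). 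The real content is surjectivity of $\Phi$ onto the limit, i.e.\ that every stable tuple lifts to $\boldsymbol{A}(G)$; this is exactly what $\mathcal{P}$-projectivity buys us via Webb's analysis of the low-degree part of the Amitsur complex.

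For the colimit formula I would dualise: the sum of inductions
\[
\Psi\:\bigoplus_{\substack{Q\leq G \\ Q\in\mathcal{P}}}\boldsymbol{A}(Q)\lra\boldsymbol{A}(G)
\]
is surjective because the summand indexed by a $p$-Sylow $P$ is (again Lemma~\ref{lem:res-p'}(b), which makes $\ind^G_P$ a split epimorphism). The relations defining $\colim_{\mathcal{D}_*}\boldsymbol{A}$ are generated by differences of the two composite induction/conjugation arrows into $\boldsymbol{A}(Q_1)$ and $\boldsymbol{A}(Q_2)$; one direction---that $\Psi$ kills these differences---is a direct consequence of the Mackey decomposition formula (MF5) applied to $\res^G_{Q_2}\ind^G_{Q_1}$ and $\res^G_{Q_1}\ind^G_{Q_2}$. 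The other direction---that these are all the relations---is the dual statement in \cite{PW:MackeyGuide}*{corollary~3.7} and again follows from $\mathcal{P}$-projectivity.

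The hard part is the surjectivity onto the limit (equivalently, the identification of $\ker\Psi$), and this is precisely where Dress's characterisation of $\mathcal{P}$-projective Mackey functors is used by Webb to upgrade a splitting of the augmentation $\bigoplus_Q\boldsymbol{A}(Q)\twoheadrightarrow\boldsymbol{A}(G)$ into exactness of the next stage of the Amitsur complex. Since we have already verified $\mathcal{P}$-projectivity, the proof proper is largely a citation: the diagrams $\mathcal{D}^*,\mathcal{D}_*$ as displayed are exactly the first-level combinatorics dictated by (MF5), so \cite{PW:MackeyGuide}*{proposition~3.6, corollary~3.7} apply verbatim to our $\boldsymbol{A}$ and deliver the two claimed identifications.
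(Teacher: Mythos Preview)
Your proposal is correct and follows essentially the same route as the paper: the paper's entire proof is the sentence ``from proposition~(3.6) and corollary~(3.7) of~\cite{PW:MackeyGuide}, we obtain'' applied after Proposition~\ref{prop:A-projective}, and you have simply unpacked what that citation is doing. One small correction: the fact that $\Psi$ annihilates the colimit relations is not (MF5) but rather (MF2), (MF4) together with $c_g=\id$ on $\boldsymbol{A}(G)$ from (MF1); (MF5) is what governs the \emph{other} direction, identifying $\ker\Psi$, and that is indeed the content imported from Webb.
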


Since $p$-Sylow subgroups are cofinal in each case, we can take
$Q_1,Q_2$ to be $p$-Sylow subgroups, then as they are all conjugate,
we can consider the equivalent diagrams of morphisms under or over
one particular $p$-Sylow subgroup, say $P\leq G$. This recovers the
formulae
\begin{align}\label{eq:A-stabelts-lim}
\boldsymbol{A}(G) &=
\bigcap_{g\in G}\ker(\res^P_{gPg^{-1}\cap P}-c_g\circ\res^P_{P\cap g^{-1}Pg})\:
                   \boldsymbol{A}(P)\lra\boldsymbol{A}(gPg^{-1}\cap P),   \\
\label{eq:A-stabelts-colim}
\boldsymbol{A}(G) &= \boldsymbol{A}(P)/
\bigcup_{g\in G}\im(\ind^P_{gPg^{-1}\cap P}-\ind^P_{gPg^{-1}\cap P}\circ c_{g^{-1}})\:
                   \boldsymbol{A}(gPg^{-1}\cap P) \lra\boldsymbol{A}(P).
\end{align}

When $N\ideal G$, for each $g\in G$,$c_g$ restricts to an
automorphism of $\boldsymbol{A}(N)$, and if $g\in N$ this
is the identity. Hence $G/N$ acts on $\boldsymbol{A}(N)$
and we can form invariants $\boldsymbol{A}(N)^{GN}$ and
coinvariants $\boldsymbol{A}(N)_{GN}$.  Now we have
\begin{prop}\label{prop:soc-normal}
Suppose that $G$ has the unique normal $p$-Sylow subgroup
$P\ideal G$. Then
\[
\boldsymbol{A}(G) = \boldsymbol{A}(P)^{G/P}
                  = \boldsymbol{A}(P)_{G/P}.
\]
Furthermore,
\[
\res^G_P(\ind^G_1(1)) = |G:P|\ind^P_1(1) \neq 0.
\]
\end{prop}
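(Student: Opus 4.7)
The plan is to derive both assertions directly from the stable elements formulae \eqref{eq:A-stabelts-lim} and \eqref{eq:A-stabelts-colim} specialised to the case where the $p$-Sylow subgroup $P$ is normal, together with Proposition~\ref{prop:ind^G_P} for the second assertion.

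Since $P\ideal G$, every conjugate $gPg^{-1}$ equals $P$, so each intersection $gPg^{-1}\cap P$ appearing in the stable elements formulae collapses to $P$ itself. Consequently all the restrictions $\res^P_{gPg^{-1}\cap P}$ and $\res^P_{P\cap g^{-1}Pg}$ become the identity on $\boldsymbol{A}(P)$, and the inductions $\ind^P_{gPg^{-1}\cap P}$ likewise become the identity. By axiom~(\textbf{MF1}), $c_g$ acts trivially on $\boldsymbol{A}(P)$ whenever $g\in P$, so the maps $c_g\:\boldsymbol{A}(P)\lra\boldsymbol{A}(P)$ descend to a well-defined action of $G/P$ on $\boldsymbol{A}(P)$.

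Substituting this information into \eqref{eq:A-stabelts-lim} gives
\[
\boldsymbol{A}(G) = \bigcap_{g\in G}\ker\bigl(\id-c_g\:\boldsymbol{A}(P)\lra\boldsymbol{A}(P)\bigr) = \boldsymbol{A}(P)^{G/P},
\]
and similarly \eqref{eq:A-stabelts-colim} yields
\[
\boldsymbol{A}(G) = \boldsymbol{A}(P)\Big/\sum_{g\in G}\im\bigl(\id-c_{g^{-1}}\:\boldsymbol{A}(P)\lra\boldsymbol{A}(P)\bigr) = \boldsymbol{A}(P)_{G/P}.
\]

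For the final statement, Proposition~\ref{prop:ind^G_P} (which applies because $P$ is a $p$-Sylow subgroup of $G$) immediately supplies the identity $\res^G_P(\ind^G_1(1)) = |G:P|\ind^P_1(1)$. Assumption~(\textbf{B}) ensures $\ind^P_1(1)\neq 0$ in $\boldsymbol{A}(P)$, and since $p\nmid|G:P|$, the integer $|G:P|$ is a unit in $\k$, so the product is non-zero. There is no real obstacle here; the proposition is essentially a direct specialisation of the machinery already developed.
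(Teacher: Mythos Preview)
Your proof is correct and follows exactly the approach the paper intends: the proposition is stated immediately after the stable elements formulae \eqref{eq:A-stabelts-lim} and \eqref{eq:A-stabelts-colim} as their direct specialisation to the normal-Sylow case, and the second assertion is precisely Proposition~\ref{prop:ind^G_P}. The paper gives no further argument beyond the remark that one could also proceed more directly via Proposition~\ref{prop:A-projective}, so your explicit unpacking is in fact more detailed than what the paper provides.
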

Of course this can also be proved more directly
by using Proposition~\ref{prop:A-projective} with
the coproduct over all the $p$-Sylow subgroups
of~$G$, or even with just one of them.

\section{Local Frobenius Green functors}\label{sec:Frobenius}

We now assume that our local Artinian Green functor
$\boldsymbol{A}$ satisfies assumptions (A) and (B),
and that $\Char\k=p>0$. We also require
\begin{Assump}[\textbf{QF}]
For each finite group $G$, $\boldsymbol{A}(G)$ is
Frobenius, \ie, it is a finite dimensional
$\k$-algebra and there is an isomorphism of left
$\boldsymbol{A}(G)$-modules
\[
\boldsymbol{A}(G)\xrightarrow{\;\iso\;}
\boldsymbol{A}(G)^* = \Hom_\k(\boldsymbol{A}(G),\k).
\]
\end{Assump}

A choice of such an isomorphism determines a Frobenius
form $\lambda\in\boldsymbol{A}(G)^*$ which is the
element corresponding to~$1$. We then refer to the
pair $(\boldsymbol{A}(G),\lambda)$ as a Frobenius
algebra (structure) on $\boldsymbol{A}(G)$. Such a
Frobenius form is characterized by the requirement
that $\ker\lambda$ contains no non-trivial left (or
equivalently right) ideals. This Frobenius condition
also implies the Gorenstein condition that
$\dim_\k\soc\boldsymbol{A}(G) = 1$; this follows from
the self-duality of $\boldsymbol{A}(G)$ and the
resulting isomorphisms
\[
\Hom_{\boldsymbol{A}(G)}(\k,\boldsymbol{A}(G))
   \iso\Hom_{\boldsymbol{A}(G)}(\boldsymbol{A}(G),\k)
   \iso\Hom_\k(\k,\k)=\k.
\]

Our assumptions (B) and (C) together imply that a linear
form $\lambda\in\boldsymbol{A}(G)^*$ is a Frobenius form
if and only if $\lambda(\ind^G_1(1)) \neq 0$ (since every
non-zero left ideal intersects the socle $\soc\boldsymbol{A}(G)$
non-trivially). We do not require that the choice of Frobenius
form should be contravariantly functorial with respect to~$G$.
However, the element $\ind^G_1(1)$ is covariantly functorial
since if $G\leq H$ then
\[
\ind^H_1(1) = \ind^H_G\ind^G_1(1).
\]

In general, if $\alpha\:G\lra H$ is a homomorphism, the restriction
$\alpha^*\:\boldsymbol{A}(H)\lra\boldsymbol{A}(G)$ need not send
$\soc\boldsymbol{A}(H)$ into $\soc\boldsymbol{A}(G)$, nor need it
be non-zero on it. However, if
$\alpha_*\:\boldsymbol{A}(G)\lra\boldsymbol{A}(H)$ is defined then
it restricts to an isomorphism
\[
\alpha_*\:\soc\boldsymbol{A}(G)\xrightarrow{\;\iso\;}\soc\boldsymbol{A}(H),
\]
since $\alpha_*(\ind^G_1(1)) = \ind^H_1(1)\neq0$.

\begin{lem}\label{lem:FrobForm-f^*}
Suppose that $\lambda$ is a Frobenius form for\/ $\boldsymbol{A}(H)$.
Then\/ $\alpha^*\lambda=\lambda\circ\alpha_*$ is a Frobenius form
for\/ $\boldsymbol{A}(G)$.
\end{lem}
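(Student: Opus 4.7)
The plan is to reduce the lemma to checking non-vanishing of the new form on the socle generator $\ind^G_1(1)$, and then to exploit covariant functoriality to transport this question back to $\boldsymbol{A}(H)$.

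First I would invoke the characterization recorded just before the lemma: under Assumption~(\textbf{QF}) the socle $\soc\boldsymbol{A}(G)$ is one-dimensional, by Proposition~\ref{prop:ind^G_1}(a) it is spanned by $\ind^G_1(1)$, and since every nonzero ideal of $\boldsymbol{A}(G)$ meets the socle, a linear functional $\mu\in\boldsymbol{A}(G)^*$ is a Frobenius form if and only if $\mu(\ind^G_1(1))\neq 0$. So it suffices to verify
\[
(\alpha^*\lambda)\bigl(\ind^G_1(1)\bigr)=\lambda\bigl(\alpha_*\ind^G_1(1)\bigr)\neq 0.
\]

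The key step is then covariant functoriality of the element $\ind^G_1(1)$, exactly as observed in the paragraph following the introduction of Assumption~(\textbf{QF}): writing $\iota^G_1\: 1\hookrightarrow G$ and $\iota^H_1\: 1\hookrightarrow H$, we have $\ind^G_1=(\iota^G_1)_*$ and $\ind^H_1=(\iota^H_1)_*$. The composite $\alpha\circ\iota^G_1$ equals $\iota^H_1$ as a homomorphism $1\ra H$, and since its kernel is trivial the push-forward $(\iota^H_1)_*$ is defined. Axiom~(\textbf{GD1}) then gives
\[
\alpha_*\bigl(\ind^G_1(1)\bigr)=\alpha_*(\iota^G_1)_*(1)=(\alpha\circ\iota^G_1)_*(1)=(\iota^H_1)_*(1)=\ind^H_1(1).
\]
Combining this with the displayed equation above yields $(\alpha^*\lambda)(\ind^G_1(1))=\lambda(\ind^H_1(1))$, which is nonzero because $\lambda$ is by hypothesis a Frobenius form on $\boldsymbol{A}(H)$.

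The main (and essentially only) thing to be careful about is that the characterization of Frobenius forms by their value on $\ind^G_1(1)$ really is available here. This needs $\dim_\k\soc\boldsymbol{A}(G)=1$ (from Assumption~(\textbf{QF})) together with $\ind^G_1(1)\neq 0$ in $\soc\boldsymbol{A}(G)$ (Assumption~(\textbf{B}) and Proposition~\ref{prop:ind^G_1}(a)); once these are in hand, the rest is just functoriality of push-forwards along the unique map $1\ra G\xrightarrow{\alpha} H$.
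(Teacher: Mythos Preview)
Your proof is correct and follows essentially the same approach as the paper's: both verify that the candidate form is non-vanishing on the socle, using that $\alpha_*$ sends $\soc\boldsymbol{A}(G)$ onto $\soc\boldsymbol{A}(H)$. The only cosmetic difference is that the paper abstracts the argument to arbitrary local Frobenius algebras $A,B$ with a local homomorphism $f\:A\to B$ and an $A$-module map $f_*\:B\to A$ satisfying $f_*\soc B=\soc A$, whereas you work directly with the concrete generator $\ind^G_1(1)$ and re-derive $\alpha_*\ind^G_1(1)=\ind^H_1(1)$ from~(\textbf{GD1}); the paper simply quotes this identity from the paragraph preceding the lemma.
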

\begin{proof}
This works very generally. Suppose that $A$ and $B$ are local
Frobenius algebras over a field $\k$, and that $f\:A\lra B$ is
a local algebra homomorphism, making $B$ into a left $A$-module
by $a\.b=f(a)b$. Suppose that $f_*\:B\lra A$ is an $A$-module
homomorphism for which $f_*\soc B=\soc A$. Then for any Frobenius
form $\lambda$ on $A$, $f^*\lambda=\lambda\circ f_*$ is a Frobenius
form on $B$. This is easy to see since
\[
f^*\lambda\soc B = \lambda\circ f_*\soc B = \lambda\soc A \neq 0,
\]
and as every non-trivial ideal intersects $\soc B$ non-trivially,
the Frobenius condition holds for the linear form $f^*\lambda$.
\end{proof}
In the situation of the proof, we may define an inner product
on $A$ by $(x\mid y)_A = \lambda(xy)$; similarly, define an
inner product on $B$ by $(x'\mid y')_B = f^*\lambda(x'y')$.
It follows that for $a\in A$ and $b\in B$,
\[
(f(a)\mid b)_B = \lambda\circ f_*(f(a)b)
               = \lambda(af_*(b)) = (a\mid f_*(b))_A.
\]
This is a version of Frobenius reciprocity.

\section{Green functors on abelian groups and $p$-divisible groups}
\label{sec:p-divgps}

For general notions of groups schemes see \cite{WCW:book}; for
$p$-divisible groups see~\cite{LNM302}. A general reference on
formal schemes and formal groups is Strickland~\cite{NPS:spf&fg}.

Let $\k$ be a field of characteristic $\Char\k=p>0$. Recall that
a finite dimensional commutative $\k$-Hopf algebra $H$ represents
a group scheme $\Spec(H)$ is a group valued functor on the category
of commutative $\k$-algebras $\CAlg_\k$ defined by
\[
\Spec(H)(A) = \CAlg_\k(H,A).
\]
Denoting the coproduct by $\psi\:H\lra H\otimes H$ and the product
on $A$ by $\phi\:A\otimes A\lra A$, the group structure is defined
by
\[
f*g = \phi(f\otimes g)\psi
\]
where $f,g\in\CAlg_\k(H,A)$. The unit is given by the counit
$\epsilon\in\CAlg_\k(H,\k)$ and the inverse is given by an algebra
automorphism $\chi\in\CAlg_\k(H,H)$.

We remark that by the Larson-Sweedler theorem, such a Hopf algebra
is a Frobenius algebra.

Recall that $H$ is \emph{connected} if it has no non-trivial idempotents.
In particular, this is true if $H$ is local. Of course this means that
$\Spec(H)$ is connected.

%A commutative $\k$-algebra valued Green functor $\boldsymbol{A}$
%is a \emph{K\"unneth functor} if it takes pullbacks of groups
%to pushouts of commutative $\k$-algebras. This means that
%given a pullback diagram of finite groups
%\[
%\xymatrix{
%G\sqcap_KH \ar[rr] \ar[dd]  && G \ar[dd]     \\
%                                      &&    \\
%H \ar[rr] && K\ar@{}[ul]|<<<<<{\PB}
%}
%\]
%there is an induced pushout diagram of commutative $\k$-algebras
%\[
%\xymatrix{
%\boldsymbol{A}(K) \ar[rr] \ar[dd]\ar@{}[dr]|<<<<<{\PO}
%                              && \boldsymbol{A}(G) \ar[dd] \\
%                              &&    \\
%\boldsymbol{A}(H) \ar[rr]     && \boldsymbol{A}(G\sqcap_KH)
%}
%\]
%where the pushout of $\boldsymbol{A}(G)$ and $\boldsymbol{A}(H)$ over
%$\boldsymbol{A}(K)$ is
%\[
%\boldsymbol{A}(G\sqcap_KH)
%          = \boldsymbol{A}(G)\otimes_{\boldsymbol{A}(K)}\boldsymbol{A}(H).
%\]
%In particular, since $\boldsymbol{A}(1)=\k$,
%\[
%\boldsymbol{A}(G\sqcap_1 H) = \boldsymbol{A}(G\times H)
%                        = \boldsymbol{A}(G)\otimes\boldsymbol{A}(H).
%\]
%
A commutative $\k$-algebra valued Green functor $\boldsymbol{A}$
is a \emph{K\"unneth functor} if it takes products of groups to
pushouts of commutative $\k$-algebras, \ie, it satisfies the
strict K\"unneth formula
\[
\boldsymbol{A}(G\times H) = \boldsymbol{A}(G)\otimes\boldsymbol{A}(H)
\]
for every pair of finite groups $G,H$.

We will impose another condition.
\begin{Assump}[\textbf{KF}]
The Green functor $\boldsymbol{A}$ is a K\"unneth functor.
\end{Assump}

When $H=G$, the diagonal homomorphism $\Delta\:G\lra G\times G$
induces the product on $\boldsymbol{A}(G)$. If $G$ is an abelian
group, the multiplication $G\times G\lra G$ is a group homomorphism
and so it induces an algebra homomorphism
\[
\boldsymbol{A}(G) \lra \boldsymbol{A}(G\times G)
                         = \boldsymbol{A}(G)\otimes\boldsymbol{A}(G)
\]
which is coassociative and counital, with antipode induced
by the inverse map $G\lra G$. Furthermore this coproduct is
cocommutative. This shows that $\boldsymbol{A}(G)$ is naturally
a cocommutative Hopf algebra.
\begin{rem}\label{rem:integrals}
Recall the theory of \emph{integrals} for finite dimensional
Hopf algebras, as described in~\cite{SM:HopfAlgonRngs}*{definition~2.1.1}
for example. For a finite dimensional local Hopf algebra $H$,
taking a generator $z\in\soc H$ we have for any $x\in H$,
\[
xz = \aug(x)z = zx,
\]
hence $z$ is a left and right integral for $H$. Therefore
\[
\soc H = \int^{\mathrm{l}}_H = \int^{\mathrm{r}}_H,
\]
so $H$ is unimodular and $\soc H = \int_H$. Of course, if
$H\neq\k$ then $\aug$ is not a Frobenius form.
\end{rem}

Now recall that a sequence of finite abelian group schemes
$\mathcal{G}_r$ ($r\geq1$) where $\mathcal{G}_r$ has order
$p^{rh}$ for some natural number $h\geq1$ over the field~$\k$
forms a \emph{$p$-divisible group} or \emph{Barsotti-Tate
group} of height~$h$ if there are exact sequences of group
schemes fitting into commutative diagrams
\[
\xymatrix{
 \mathcal{G}_r\ar@{ >->}[rr]^{i_{r,r+s}}
    && \mathcal{G}_{r+s} \ar@{.>}[dr]^{p^r}
      \ar@{->>}[rr]^{q_{r+s,s}} && \mathcal{G}_s\ar@{ >.>}[dl] \\
    && &\mathcal{G}_{r+s}&  \\
}
\]
for all $r,s\geq1$, where
$\mathcal{G}_r\xrightarrow{\;i_{r,r+s}\;}\mathcal{G}_{r+s}$
is a kernel for multiplication by $p^{r}$ on $\mathcal{G}_{r+s}$.
These are required to be compatible in the sense that there are
commutative diagrams of the following forms.
\[
\xymatrix{
 \mathcal{G}_r\ar@{ >->}[rr]^{i_{r,r+s}}\ar[ddd]_{=}
 && \mathcal{G}_{r+s}\ar@{ >->}[ddd]_{i_{r+s,r+s+1}} \ar@{.>}[dr]^{p^r}
    \ar@{->>}[rr]^{q_{r+s,s}} && \mathcal{G}_s\ar@{ >.>}[dl]\ar@{ >->}[ddd]^{i_{s,s+1}} \\
     && &\mathcal{G}_{r+s}\ar[d]^{i_{r+s,r+s+1}}&  \\
 && &\mathcal{G}_{r+s+1}&   \\
 \mathcal{G}_r\ar@{ >->}[rr]^{i_{r,r+s+1}} && \mathcal{G}_{r+s+1} \ar@{.>}[ur]^{p^r}
    \ar@{->>}[rr]^{q_{r+s+1,s+1}} && \mathcal{G}_{s+1}\ar@{ >.>}[ul]
}
\]

\bigskip
\[
\xymatrix{
 \mathcal{G}_r\ar@{ >->}[ddd]_{i_{r,r+1}}\ar@{ >->}[rr]^{i_{r,r+s}}
 && \mathcal{G}_{r+s}\ar@{ >->}[ddd]_{i_{r+s,r+s+1}} \ar@{.>}[dr]^{p^r}
    \ar@{->>}[rr]^{q_{r+s,s}} && \mathcal{G}_s\ar@{ >.>}[dl]\ar[ddd]^{=} \\
     && &\mathcal{G}_{r+s}\ar[d]^{pi_{r+s,r+s+1}}&  \\
 && &\mathcal{G}_{r+s+1}&   \\
 \mathcal{G}_{r+1}\ar@{ >->}[rr]^{i_{r,r+s+1}} && \mathcal{G}_{r+s+1} \ar@{.>}[ur]^{p^{r+1}}
    \ar@{->>}[rr]^{q_{r+s+1,s+1}} && \mathcal{G}_s\ar@{ >.>}[ul]
}
\]
If $\mathcal{G}_r=\Spec(H_r)$ for some cocommutative Hopf algebra
$H_r$, then $\dim_\k H_r=p^{rh}$ and there are morphisms of Hopf
algebras
\begin{equation}\label{eq:pdiv-HA}
\xymatrix{
H_r & \ar@{->>}[l] H_{r+s} & \ar@{ >->}[l] H_s
}
\end{equation}
inducing the diagram of group schemes
\begin{equation}\label{eq:pdiv-GpSch}
\xymatrix{
\mathcal{G}_r\ar@{ >->}[r]^{i_{r,r+s}} & \mathcal{G}_{r+s}
               \ar@{->>}[r]^{q_{r+s,s}} &\mathcal{G}_s.
}
\end{equation}

We are interested in the case where each $\mathcal{G}_r$
is connected, and this follows from the requirement that
every $H_r$ is a local $\k$-algebra. Then we have the
following Borel algebra decomposition of each $H_r$.
\begin{prop}\label{prop:Borel-decomp}
For each $r\geq1$, there is an isomorphism of $\k$-algebras
of the form
\[
H_r \iso
\k[x_1,x_2,\ldots,x_\ell]/(x_1^{q_1},x_2^{q_2},\ldots,x_\ell^{q_\ell}),
\]
where $q_i = p^{d_i}$ for some $d_i\geq1$ and $d_1 + d_2 + \cdots + d_\ell = rh$.
\end{prop}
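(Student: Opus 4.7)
The plan is to choose algebra generators of $H_r$ lifting a $\k$-basis of $\mathfrak{m}/\mathfrak{m}^2$ (where $\mathfrak{m}=\ker\epsilon$ is the augmentation ideal), to show each satisfies a minimal relation of the form $x_i^{p^{d_i}}=0$, and then to show the natural surjection from the truncated polynomial algebra is an isomorphism. The dimension count then forces $\sum d_i=rh$.

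First I would set up the basic structure. Since $H_r$ is a finite-dimensional local $\k$-algebra, $\mathfrak{m}$ is nilpotent and every element of $\mathfrak{m}$ is nilpotent. Let $\ell=\dim_\k\mathfrak{m}/\mathfrak{m}^2$ and choose $x_1,\ldots,x_\ell\in\mathfrak{m}$ whose residues form a basis of $\mathfrak{m}/\mathfrak{m}^2$; by Nakayama these generate $H_r$ as a $\k$-algebra. The cocommutative Hopf algebra structure is crucial here: because $H_r$ is commutative of characteristic $p$, the Frobenius $F\colon H_r\to H_r$, $a\mapsto a^p$, is a ring homomorphism, and one checks that $F$ descends to an operator on $\mathfrak{m}/\mathfrak{m}^2$ via the linearization of the coproduct.

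Next I would identify the minimal nilpotency orders. For each generator $x_i$, the comultiplication gives (after reducing modulo $\mathfrak{m}\otimes\mathfrak{m}^2+\mathfrak{m}^2\otimes\mathfrak{m}$) an element which, modulo the decomposables, behaves primitively. For a primitive element $x$ in a commutative Hopf algebra over $\F_p$, the identity
\[
\psi(x^n)=\sum_{k=0}^n\binom{n}{k}x^k\otimes x^{n-k}
\]
together with Lucas' theorem forces the minimal $n$ with $x^n=0$ to be a power of~$p$: otherwise some $\binom{n}{k}$ with $0<k<n$ would be nonzero in $\F_p$, contradicting primitivity. Applying this (with suitable care since the $x_i$ are only primitive modulo higher filtration terms) yields $q_i=p^{d_i}$ for each $i$.

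The main step, and the principal obstacle, is showing that the induced map
\[
\Phi\colon\k[x_1,\ldots,x_\ell]/(x_1^{q_1},\ldots,x_\ell^{q_\ell})\longrightarrow H_r
\]
is an isomorphism and not merely a surjection. This is the content of Borel's structure theorem for finite-dimensional commutative connected cocommutative Hopf algebras in characteristic~$p$; see Waterhouse \emph{Introduction to affine group schemes} \S14 or Demazure-Gabriel IV.3. The strategy is induction on $\dim_\k H_r$: choose $x_1$ so that the subalgebra $\k[x_1]/(x_1^{q_1})$ is a Hopf subalgebra (selecting $x_1$ carefully within its coset in $\mathfrak{m}/\mathfrak{m}^2$ using the Frobenius filtration $\ker F\subseteq\ker F^2\subseteq\cdots$), form the quotient Hopf algebra $H_r/\!/\k[x_1]$, and apply the induction hypothesis. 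The delicate point is lifting generators so that the resulting subalgebras are genuine tensor factors at the algebra level, even though the coalgebra structure need not respect this decomposition. Finally, counting $\dim\Phi(\text{source})=\prod p^{d_i}=p^{\sum d_i}$ against $\dim_\k H_r=p^{rh}$ proves both bijectivity of $\Phi$ and the identity $\sum d_i=rh$.
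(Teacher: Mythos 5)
Your proposal is essentially the paper's approach: the paper states this proposition without proof, simply invoking the classical Borel structure theorem for finite connected commutative group schemes over a field of characteristic $p$ (Waterhouse \S 14, Demazure--Gabriel), which is exactly the result you cite for the main step, and the closing dimension count giving $\sum_i d_i = rh$ from $\dim_\k H_r = p^{rh}$ is the same. The only caveats worth recording are that the structure theorem needs $\k$ perfect (true in the paper's intended examples $\F_{p^n}$ and $\bar{\F}_p$, but left implicit in both the paper and your write-up), and that your preliminary ``primitive modulo higher filtration terms plus Lucas'' step is superfluous once you appeal to that theorem, which already delivers the $p$-power truncation exponents.
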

Notice that
\[
\soc H_r = \k\{x_1^{q_1-1}x_2^{q_2-1}\cdots x_\ell^{q_\ell-1}\}.
\]

The restriction epimorphisms $H_r\lra H_{r-1}$ have a limit
\begin{equation}\label{eq:H-lim}
H = \ds\lim_r H_r = \k[\![x_1,x_2,\ldots,x_\ell]\!]
\end{equation}
which has a topological coproduct $\psi\:H\lra H\hat{\otimes}_\k H$
defining a cocommutative \emph{formal group} over $\k$ of
dimension~$\ell$. Each $H_r$ can be recovered from $H$ by
forming the quotient with respect to the ideal generated by
the image of the multiplication by~$p^r$, expressible as a
composition
\[
\xymatrix{
H \ar[r]^(.31){\psi^{(p)}}
  & H\hat{\otimes}_\k\cdots\hat{\otimes}_\k H\ar[r]^(.7){\phi^{(p)}}
  & H
}
\]

We now introduce another assumption.
\begin{Assump}[\textbf{D}]
The Hopf algebras $\boldsymbol{A}(C_{p^r})$ with restriction
and inflation homomorphisms $\res^{C_{p^{r+s}}}_{C_{p^r}}$
and $\res^{C_{p^s}}_{C_{p^{r+s}}}$ (induced by the canonical
quotient homomorphism $C_{p^{r+s}}\lra C_{p^s}$) give rise
to a $p$-divisible group which satisfies
$\mathcal{G}_r=\Spec\boldsymbol{A}(C_{p^r})$.
\end{Assump}

We can immediately deduce a non-triviality result; a version
of this for Morava $K$-theory appeared in John Hunton's PhD
thesis.
\begin{prop}\label{prop:G->Cp}
Suppose that $\pi\:G\lra C_{p^s}$ be an epimorphism
for $s\geq1$. Then the induced homomorphism
$\pi^*\:\boldsymbol{A}(C_{p^s})\lra\boldsymbol{A}(G)$
is a monomorphism.
\end{prop}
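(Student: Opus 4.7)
The plan is a three-step reduction down to a surjection between cyclic $p$-groups, where Assumption~(D) applies directly.

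First I would reduce to the case where $G$ itself is a $p$-group. Choose a $p$-Sylow subgroup $P\leq G$. Since $\pi$ is surjective and $C_{p^s}$ is a $p$-group, the subgroup $(\ker\pi)\.P$ has index in $G$ dividing both $|G:P|$ (coprime to $p$) and $|G:\ker\pi|=p^s$, so $(\ker\pi)\.P=G$ and therefore $\pi|_P\:P\twoheadrightarrow C_{p^s}$ is surjective. By (GD1), $\res^G_P\circ\pi^* = (\pi|_P)^*$, and by Lemma~\ref{lem:res-p'}(b) the map $\res^G_P$ is a split monomorphism, so injectivity of $\pi^*$ would follow from injectivity of $(\pi|_P)^*$.

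Next I would reduce to the case of a surjection between cyclic $p$-groups. Pick a generator $\bar g\in C_{p^s}$ and any preimage $g\in P$; then $g$ has order $p^t$ for some $t\geq s$, and the cyclic subgroup $C=\<g\>\iso C_{p^t}$ satisfies $\pi(C)=C_{p^s}$, since $\pi$ sends a generator of $C$ to the generator $\bar g$ of $C_{p^s}$. Applying (GD1) once more gives $\res^P_C\circ(\pi|_P)^* = (\pi|_C)^*$, so it is enough to show that $(\pi|_C)^*\:\boldsymbol{A}(C_{p^s})\lra\boldsymbol{A}(C_{p^t})$ is injective.

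Finally, the kernel of any surjection $C_{p^t}\twoheadrightarrow C_{p^s}$ equals the unique subgroup of $C_{p^t}$ of order $p^{t-s}$, so $\pi|_C$ factors as $\alpha\circ q$, where $q\:C_{p^t}\lra C_{p^s}$ is the canonical quotient that appears in Assumption~(D) and $\alpha$ is an automorphism of $C_{p^s}$. Thus $(\pi|_C)^*=q^*\circ\alpha^*$, where $\alpha^*$ is a $\k$-algebra automorphism of $\boldsymbol{A}(C_{p^s})$ and $q^*$ is precisely the inflation map $\res^{C_{p^s}}_{C_{p^t}}$ of Assumption~(D), which by the $p$-divisible group structure~\eqref{eq:pdiv-HA} is the Hopf algebra injection $H_s\hookrightarrow H_t$. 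Consequently $(\pi|_C)^*$ is injective.

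The main obstacle I expect is this last step: one must verify that the map $\res^{C_{p^s}}_{C_{p^t}}$ in Assumption~(D) corresponds under $\Spec$ to the quotient $\mathcal{G}_t\twoheadrightarrow\mathcal{G}_s$ (so is a Hopf algebra injection rather than a surjection), and to confirm that an arbitrary surjection $C_{p^t}\twoheadrightarrow C_{p^s}$ differs from the canonical quotient only by an element of $\operatorname{Aut}(C_{p^s})$, which induces an isomorphism on $\boldsymbol{A}(C_{p^s})$ by contravariant functoriality. The first two reductions are routine Sylow theory together with (GD1) and Lemma~\ref{lem:res-p'}(b).
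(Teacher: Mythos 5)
Your proof is correct and rests on essentially the same idea as the paper's: lift a generator of $C_{p^s}$ to an element of $p$-power order in $G$, thereby factoring a canonical cyclic quotient (up to an automorphism of $C_{p^s}$) through $\pi$, and then invoke the injectivity built into Assumption~(D) via the $p$-divisible group structure~\eqref{eq:pdiv-HA}. The paper's version is shorter because it lifts directly in $G$ rather than first passing to a Sylow subgroup, and note that your appeal to Lemma~\ref{lem:res-p'}(b) is actually superfluous: once $(\pi|_P)^*=\res^G_P\circ\pi^*$ is known to be injective, $\pi^*$ is automatically injective, with no need for $\res^G_P$ to be a split monomorphism.
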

\begin{proof}
Lifting a generator of $C_{p^s}$ to $G$, we obtain
a commutative diagram of the form
\[
\xymatrix{
C_{p^{r+s}}\ar[d]\ar@{->>}[rrd]^{\quo} && \\
G \ar@{->>}[rr]^(.415){\pi} && C_{p^{s}}
}
\]
and on applying $\boldsymbol{A}(-)$ this gives
\[
\xymatrix{
\boldsymbol{A}(C_{p^{r+s}}) && \\
\boldsymbol{A}(G)\ar[u]
&& \ar@{ >->}[ll]_(.585){\pi^*}\boldsymbol{A}(C_{p^{s}})\ar@{ >->}[llu]_{\quo^*}
}
\]
where we know from~\eqref{eq:pdiv-HA} that
$\quo^*=\res^{C_{p^{s}}}_{C_{p^{r+s}}}$ is monic and so $\pi^*$
is also monic.
\end{proof}
Of course, if $G$ is a non-trivial $p$-group, such epimorphisms
always exist.
\begin{cor}\label{cor:G->Cp}
If\/ $G$ is a non-trivial $p$-group, then $\boldsymbol{A}(G)$
is non-trivial, $\boldsymbol{A}(G)\neq\k$. In particular,
$0 \neq \ind^G_1(1)\notin\k$.
\end{cor}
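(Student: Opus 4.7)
The plan is to combine Proposition~\ref{prop:G->Cp} with the standard fact that any non-trivial finite $p$-group admits a cyclic $p$-power quotient, and then bring in the dimension count coming from Assumption~\textbf{(D)}.

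First, since $G$ is a non-trivial $p$-group, it is nilpotent with non-trivial abelianization $G/[G,G]$, which is a finite non-trivial abelian $p$-group. Choose any epimorphism of $G/[G,G]$ onto a cyclic group of order~$p$ and compose with the quotient $G\twoheadrightarrow G/[G,G]$ to obtain an epimorphism $\pi\:G\lra C_p$.

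Next, Proposition~\ref{prop:G->Cp} gives a monomorphism
\[
\pi^*\:\boldsymbol{A}(C_p)\lra\boldsymbol{A}(G).
\]
By Assumption~\textbf{(D)}, $\Spec\boldsymbol{A}(C_p)=\mathcal{G}_1$ is part of a $p$-divisible group of some height $h\geq1$, so $\dim_\k\boldsymbol{A}(C_p)=p^h\geq p>1$ and in particular $\boldsymbol{A}(C_p)\neq\k$. Since $\pi^*$ is injective, $\boldsymbol{A}(G)$ strictly contains $\k$ as well, proving $\boldsymbol{A}(G)\neq\k$.

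For the ``in particular'' clause, Assumption~\textbf{(B)} guarantees $\ind^G_1(1)\neq0$. On the other hand, since $G$ is a non-trivial $p$-group we have $p\mid|G|=|G:1|$, so Proposition~\ref{prop:ind-pindex} yields $\ind^G_1(1)\in\mathfrak{m}(G)$. The unit embedding $\k\hookrightarrow\boldsymbol{A}(G)$ together with~\eqref{eq:residue=k} gives the direct sum decomposition $\boldsymbol{A}(G)=\k\cdot1\oplus\mathfrak{m}(G)$ of $\k$-vector spaces; since $\ind^G_1(1)$ is a non-zero element of $\mathfrak{m}(G)$, it cannot lie in $\k\cdot1$. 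No real obstacle arises here; both ingredients, the existence of a cyclic $p$-quotient and the dimensional lower bound from the $p$-divisible structure, have already been set up, so the argument is essentially a short assembly of Proposition~\ref{prop:G->Cp}, Proposition~\ref{prop:ind-pindex}, and Assumptions~\textbf{(B)},~\textbf{(D)}.
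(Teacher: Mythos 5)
Your proof is correct. The first half (establishing $\boldsymbol{A}(G)\neq\k$) follows the same route the paper takes: the remark immediately before the corollary ("Of course, if $G$ is a non-trivial $p$-group, such epimorphisms always exist") invokes Proposition~\ref{prop:G->Cp}, and implicitly uses Assumption~(D) to know that $\boldsymbol{A}(C_{p^s})\neq\k$. You make the abelianization step and the dimension count $\dim_\k\boldsymbol{A}(C_p)=p^h>1$ explicit, which is a reasonable unpacking. For the ``in particular'' clause you take a genuinely different route. The paper cites Proposition~\ref{prop:ind^G_1}(a) to place $\ind^G_1(1)$ in $\soc\boldsymbol{A}(G)$ and then observes that the socle misses $\k\cdot1\setminus\{0\}$ once $\boldsymbol{A}(G)\neq\k$. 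You instead apply Proposition~\ref{prop:ind-pindex} with $H=1$ (valid since $p\mid|G:1|=|G|$) to place $\ind^G_1(1)$ in $\mathfrak{m}(G)$, and then use the splitting $\boldsymbol{A}(G)=\k\cdot1\oplus\mathfrak{m}(G)$ coming from~\eqref{eq:residue=k}. Both are valid, since $\soc\boldsymbol{A}(G)\subseteq\mathfrak{m}(G)$ whenever $\boldsymbol{A}(G)\neq\k$. One thing worth noticing about your variant is that it actually makes your first half unnecessary: Assumption~(B) gives $\ind^G_1(1)\neq0$, and Proposition~\ref{prop:ind-pindex} already puts it in $\mathfrak{m}(G)$, so $\mathfrak{m}(G)\neq0$ and hence $\boldsymbol{A}(G)\neq\k$ follows with no appeal to Proposition~\ref{prop:G->Cp} or Assumption~(D) at all. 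The paper's route through the $p$-divisible structure is thematically natural for this section but not logically needed for this particular corollary.
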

\begin{proof}
For the statement about $\ind^G_1(1)$, recall that
$0\neq\ind^G_1(1)\in\soc\boldsymbol{A}(G)$ and since
$\boldsymbol{A}(G)\neq\k$ we must have $\soc\boldsymbol{A}(G)\neq\k$.
\end{proof}
\begin{thm}\label{thm:non-triviality}
Let $G$ be a finite group. Then $\boldsymbol{A}(G)=\k$ if and
only if $p\nmid|G|$.
\end{thm}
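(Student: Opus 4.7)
The plan is to prove both directions separately. The forward implication, that $p\nmid|G|$ forces $\boldsymbol{A}(G)=\k$, is already established as Proposition~\ref{prop:p'-groups} (an easy consequence of Lemma~\ref{lem:res-p'}(b) together with Assumption~(A), since the restriction to the trivial subgroup is then a split monomorphism onto $\k$). So the real content is the converse: if $p\mid|G|$, then $\boldsymbol{A}(G)\neq\k$.

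For the converse, I would argue contrapositively, assuming $p\mid|G|$ and exhibiting an element of $\boldsymbol{A}(G)$ outside $\k\cdot 1$. The natural candidate is $\ind^G_1(1)$, which we already know is nonzero by Assumption~(B). Fix a $p$-Sylow subgroup $P\leq G$; since $p\mid|G|$, the subgroup $P$ is non-trivial. By Corollary~\ref{cor:G->Cp} (which uses the $p$-divisible group Assumption~(D) via Proposition~\ref{prop:G->Cp} applied to any quotient of $P$ onto a non-trivial cyclic $p$-group), we have $\ind^P_1(1)\notin\k\cdot 1$ in $\boldsymbol{A}(P)$.

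Now I would combine this with Proposition~\ref{prop:ind^G_P}, which gives the key compatibility
\[
\res^G_P\bigl(\ind^G_1(1)\bigr) \;=\; |G:P|\,\ind^P_1(1).
\]
Since $P$ is a $p$-Sylow, $|G:P|$ is coprime to $p$ and is therefore a unit in $\k$; multiplying the non-scalar element $\ind^P_1(1)$ by this unit still produces a non-scalar element of $\boldsymbol{A}(P)$. Because $\res^G_P$ is a unital $\k$-algebra homomorphism, it carries $\k\cdot 1\subseteq\boldsymbol{A}(G)$ isomorphically onto $\k\cdot 1\subseteq\boldsymbol{A}(P)$, so an element of $\boldsymbol{A}(G)$ whose restriction lies outside $\k\cdot 1$ must itself lie outside $\k\cdot 1$. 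Hence $\ind^G_1(1)\notin\k\cdot 1$ in $\boldsymbol{A}(G)$, giving $\boldsymbol{A}(G)\neq\k$ as required.

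There is really no obstacle here beyond lining up the right input facts: Assumption~(D) enters only implicitly through Corollary~\ref{cor:G->Cp}, which is what guarantees non-triviality in the base case of $p$-groups; the passage from a general $G$ with $p\mid|G|$ to a $p$-group is then handled cleanly by Proposition~\ref{prop:ind^G_P}, whose Mackey-formula proof is already in hand. The mildly delicate point is keeping track of which elements are scalars under restriction, but this is automatic for any unital algebra map.
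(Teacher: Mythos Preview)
Your proposal is correct and follows essentially the same approach as the paper. The only cosmetic difference is that you invoke Proposition~\ref{prop:ind^G_P} for the identity $\res^G_P(\ind^G_1(1)) = |G:P|\,\ind^P_1(1)$, whereas the paper re-derives this formula in place via the Mackey double coset formula; the logical content is identical.
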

\begin{proof}
We must show that if $G$ is a non-trivial finite group whose
order is divisible by $p$, then $\boldsymbol{A}(G)\neq\k$.
We know this holds for $p$-groups, so suppose that $G$ is
not a $p$-group. Let $P\leq G$ be a $p$-Sylow subgroup. Then
the Mackey formula gives
\begin{align*}
\res^G_P\ind^G_1(1) &= \sum_{g\:P\backslash G/1}\ind^P_1 c_g \res^1_1(1) \\
 &= \sum_{g\:P\backslash G/1}\ind^P_1(1) \\
 &= |G:P|\ind^P_1(1).
\end{align*}
By Corollary~\ref{cor:G->Cp}, $|G:P|\ind^P_1(1)\notin\k$,
hence $\ind^G_1(1)\notin\k$. This shows that $\boldsymbol{A}(G)\neq\k$.
\end{proof}

If we restrict attention to finite abelian $p$-groups, then
the following result holds, see~\cite{HKR:K*BG}*{proposition~2.4}
for the Morava $K$-theory version.
\begin{thm}\label{thm:abelgp-mono-epi}
Suppose that $G,H,K$ are finite abelian $p$-groups. \\
\emph{(a)} If $\phi\:G\lra H$ is an epimorphism, then
$\phi^*\:\boldsymbol{A}(H)\lra\boldsymbol{A}(G)$ is monic. \\
\emph{(b)} If $\theta\:K\lra G$ is a monomorphism, then
$\theta^*\:\boldsymbol{A}(G)\lra\boldsymbol{A}(K)$ is
epic.

In each case, the converse also holds.
\end{thm}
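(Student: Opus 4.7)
The plan is to reduce to the cyclic case via the K\"unneth condition (Assumption \textbf{KF}) and Smith normal form, then invoke Assumption~(\textbf{D}).

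Fix decompositions $G\iso C_{p^{a_1}}\times\cdots\times C_{p^{a_m}}$ and $H\iso C_{p^{b_1}}\times\cdots\times C_{p^{b_n}}$. For any homomorphism $\phi\:G\lra H$, Smith normal form (applied to the matrix of $\phi$ over $\Z_p$) supplies automorphisms $\sigma$ of $G$ and $\tau$ of $H$ so that $\tau\phi\sigma$ is ``diagonal'': after reordering,
\[
\tau\phi\sigma=\phi_1\times\cdots\times\phi_k\times 0\times 0,
\]
where each $\phi_i\:C_{p^{a'_i}}\lra C_{p^{b'_i}}$ is a map between single cyclic factors and the two trailing zero maps absorb any unmatched $G$- or $H$-factors. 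Because $\sigma^*$ and $\tau^*$ are isomorphisms of $\k$-algebras, $\phi^*$ and $(\tau\phi\sigma)^*$ are simultaneously monic (resp.\ epic). By~(\textbf{KF}), $(\tau\phi\sigma)^*$ decomposes as a tensor product over~$\k$ of the maps $\phi_i^*$ together with units $\k\lra\boldsymbol{A}(G_{\mathrm{extra}})$ and augmentations $\boldsymbol{A}(H_{\mathrm{extra}})\lra\k$ for the unmatched factors. Tensor products of monomorphisms (resp.\ epimorphisms) of $\k$-vector spaces are monic (resp.\ epic), so it suffices to treat a single cyclic factor $\phi_i$.

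For cyclic groups, any epimorphism $C_{p^a}\twoheadrightarrow C_{p^b}$ differs from the canonical quotient of Assumption~(\textbf{D}) by an automorphism, hence on applying $\boldsymbol{A}(-)$ it yields (up to isomorphism) the monic inflation $\boldsymbol{A}(C_{p^b})\hookrightarrow\boldsymbol{A}(C_{p^a})$ recorded in~\eqref{eq:pdiv-HA}. Dually, any monomorphism $C_{p^a}\hookrightarrow C_{p^b}$ induces the epic restriction $\boldsymbol{A}(C_{p^b})\twoheadrightarrow\boldsymbol{A}(C_{p^a})$. The units $\k\lra\boldsymbol{A}(G_{\mathrm{extra}})$ are monic and the augmentations $\boldsymbol{A}(H_{\mathrm{extra}})\lra\k$ are epic, so parts (a) and (b) follow.

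For the converses, argue by contrapositive. If $\phi$ is not epi, then in the Smith decomposition either some $H_{\mathrm{extra}}$ is present (whose augmentation has the non-trivial augmentation ideal as kernel---non-trivial by Corollary~\ref{cor:G->Cp}), or some $\phi_i$ fails to be an epimorphism of cyclic groups, in which case it factors as $C_{p^{a'_i}}\twoheadrightarrow C_{p^c}\hookrightarrow C_{p^{b'_i}}$ with $c<b'_i$ and the second map induces an epic restriction with non-trivial kernel by a dimension count (using $\dim_\k\boldsymbol{A}(C_{p^r})=p^{rh}$ from the height-$h$ $p$-divisible structure). Tensoring this non-trivial kernel element with units in the remaining factors exhibits a non-zero element of $\ker\phi^*$. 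The converse of (b) is symmetric. The main obstacle is simply the bookkeeping around unmatched cyclic factors after Smith normal form, which is handled cleanly once these are identified with the unit and augmentation tensor factors in the K\"unneth decomposition.
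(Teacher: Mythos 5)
Your reduction strategy has a genuine gap at the Smith normal form step: it is \emph{not} true that every homomorphism (or even every monomorphism or epimorphism) of finite abelian $p$-groups can be ``diagonalized'' by pre- and post-composing with automorphisms. A minimal counterexample: let $\theta\:C_{p^2}\lra C_{p^3}\times C_p$ send a generator to $(p,1)$. This is a monomorphism, but there is no direct-sum decomposition $C_{p^3}\times C_p = A\oplus B$ into cyclic factors with $\theta(C_{p^2})\subseteq A$: any element of order $p^3$ in $C_{p^3}\times C_p$ has unit first coordinate, and no multiple of such an element can be $(p,1)$ (the first coordinate would be divisible by $p$ exactly when the second is), while $(p,1)$ has order $p^2$ and so cannot lie in a $C_p$ summand either. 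Dually, the epimorphism $\phi\:C_{p^3}\times C_p\lra C_{p^2}$, $(a,b)\mapsto a+pb$, has kernel isomorphic to $C_{p^2}$, which is not a direct summand of $C_{p^3}\times C_p$, so $\phi$ cannot be written as $\phi_1\times 0$ on any cyclic decomposition. The subtlety is that Smith normal form over $\Z_p$ diagonalizes a \emph{lift} of $\phi$ to a map of free $\Z_p$-modules, but the resulting change-of-basis matrices need not preserve the relation submodules and hence do not descend to automorphisms $\sigma$, $\tau$ of $G$ and $H$. (Indeed the classification of subgroup pairs of finite abelian $p$-groups is well known to be strictly harder than ``always split''.)

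The paper's proof sidesteps this entirely. For~(a) it does not attempt to diagonalize $\phi$: it lifts generators of $H\iso\prod_i C_{p^{s_i}}$ back to $G$ to produce a surjection $\prod_i C_{p^{r_i+s_i}}\twoheadrightarrow G$ whose composite with $\phi$ is the \emph{canonical} product quotient; applying $\boldsymbol{A}$ and (\textbf{KF}), the composite is a tensor product of the monics from (\textbf{D}), forcing $\phi^*$ monic. For~(b) it uses Pontrjagin duality and injectivity of $C_{p^\infty}$ to extend each projection $K\to C_{p^{t_i}}$ over $\theta$ to a character of $G$, showing the algebra generators of $\boldsymbol{A}(K)$ lie in $\im\theta^*$. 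Both arguments only ever invoke (\textbf{D}) for the \emph{standard} cyclic surjections, never for the given $\phi$ or $\theta$ rearranged. Your cyclic-case analysis and the observation that tensor products of monics (resp.\ epics) over a field are monic (resp.\ epic) are correct and would work if the diagonalization held; you would need to replace the Smith normal form reduction with something like the paper's lifting or duality argument to repair the proof.
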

\begin{proof}
As a starting point, we recall that Assumption~(\textbf{D})
implies this for the canonical epimorphisms $C_{p^{r+s}}\lra C_{p^s}$
and monomorphisms $C_{p^r}\lra C_{p^{r+s}}$.

\noindent
(a) Since $H$ is isomorphic to a product of cyclic groups,
\[
H \iso C_{p^{s_1}}\times\cdots\times C_{p^{s_k}}
\]
we can choose lifts of the generators to elements of $G$
and then define a homomorphism
\[
C_{p^{r_1+s_1}}\times\cdots\times C_{p^{r_k+s_k}} \lra G
\]
and a factorisation of the canonical quotient
\[
\xymatrix{
C_{p^{r_1+s_1}}\times\cdots\times C_{p^{r_k+s_k}}\ar[rr]\ar@{->>}[dd]
                                    && G\ar[dd]^{\phi} \\
                                    && \\
C_{p^{s_1}}\times\cdots\times C_{p^{s_k}} && \ar[ll]_(.38){\iso} H
}
\]
to which we can apply $\boldsymbol{A}$. In the resulting
diagram
\[
\xymatrix{
\boldsymbol{A}(C_{p^{r_1+s_1}})\otimes\cdots\otimes \boldsymbol{A}(C_{p^{r_k+s_k}})
                                    && \ar[ll]\boldsymbol{A}(G) \\
                                    && \\
\boldsymbol{A}(C_{p^{s_1}})\otimes\cdots\otimes\boldsymbol{A}(C_{p^{s_k}})
    \ar@{ >->}[uu] && \ar[ll]_(.34){\iso}\boldsymbol{A}(H)\ar[uu]_{\phi^*}
}
\]
we see that $\phi^*$ must be monic.

\noindent
(b) Taking the Pontrjagin dual of $\theta$ we obtain
an exact sequence
\[
0 \la \Hom(K,C_{p^\infty}) \xleftarrow{\;\theta^*\;} \Hom(G,C_{p^\infty}),
\]
where $\ds C_{p^\infty} = \colim_r C_{p^r}\subseteq S^1$,
which is an injective $\Z$-module. Since $K$ is a product
of cyclic groups,
\[
K \iso C_{p^{t_1}}\times\cdots\times C_{p^{t_\ell}},
\]
and each projection to a factor $C_{p^{t_i}}$ gives
a homomorphism
\[
\lambda_i\:K \xrightarrow{\;\iso\;} C_{p^{t_1}}\times\cdots\times C_{p^{t_\ell}}
\xrightarrow{\;\ph{\iso}\;} C_{p^{t_i}} \xrightarrow{\;\ph{\iso}\;}
    C_{p^\infty}\xrightarrow{\;\ph{\iso}\;} S^1
\]
we obtain algebra generators for
\[
\boldsymbol{A}(C_{p^{t_1}}\times\cdots\times C_{p^{t_\ell}}) \iso
\boldsymbol{A}(C_{p^{t_1}})\otimes\cdots\otimes\boldsymbol{A}(C_{p^{t_\ell}})
\]
by applying $\lambda_i^*$ to the generators of~$H$ given
in~\eqref{eq:H-lim}. Since each $\lambda_i$ factors through~$G$,
this shows that the algebra generators of $\boldsymbol{A}(K)$
are all in the image of $\theta^*$, therefore $\theta^*$ is
epic.
%
%By standard results on finitely generated abelian groups,
%an epimorphism as in (a) is equivalent to a canonical
%quotient of form
%\[
%C_{p^{r_1+s_1}}\times\cdots\times C_{p^{r_k+s_k}}
%         \lra C_{p^{s_1}}\times\cdots\times C_{p^{s_k}}
%\]
%and applying the K\"unneth isomorphism, this induces a
%monomorphism of Hopf algebras
%\[
%\boldsymbol{A}(C_{p^{s_1}})\otimes \cdots \otimes\boldsymbol{A}(C_{p^{s_k}})
%\lra
%\boldsymbol{A}(C_{p^{r_1+s_1}})\otimes \cdots \otimes\boldsymbol{A}(C_{p^{r_k+s_k}}).
%\]
%and which is equivalent to a monomorphism
%\[
%\boldsymbol{A}(C_{p^{s_1}}\times\cdots\times C_{p^{s_k}})
%\xrightarrow{\;\iso\;}
%\boldsymbol{A}(C_{p^{r_1+s_1}}\times\cdots\times C_{p^{r_k+s_k}}).
%\]
%This gives (a).
%
%For (b), a monomorphism is equivalent to a canonical inclusion
%of form
%\[
%C_{p^{r_1}}\times\cdots\times C_{p^{r_\ell}}
%\lra
%C_{p^{r_1+s_1}}\times\cdots\times C_{p^{r_\ell+s_\ell}}
%\]
%inducing an epimorphism of Hopf algebras
%\[
%\boldsymbol{A}(C_{p^{r_1+s_1}})\otimes \cdots \otimes\boldsymbol{A}(C_{p^{r_\ell+s_\ell}})
%\lra
%\boldsymbol{A}(C_{p^{r_1}})\otimes \cdots \otimes\boldsymbol{A}(C_{p^{r_\ell}}).
%\]
%This easily implies (b).

The converse statements are easily verified.
\end{proof}

The next result follows easily using standard facts about
commutative groups schemes.
\begin{cor}\label{cor:abelgp-mono-epi}
Suppose that
\[
1\ra G' \xrightarrow{\;f\;} G \xrightarrow{\;g\;} G'' \ra 1
\]
is a short exact sequence of finite abelian $p$-groups.
Then the induced homomorphisms of Hopf algebras
\[
\boldsymbol{A}(G'') \xrightarrow{\;g^*\;} \boldsymbol{A}(G)
                 \xrightarrow{\;f^*\;}\boldsymbol{A}(G')
\]
induce  a short exact sequence of commutative groups schemes
\[
1 \ra \Spec(\boldsymbol{A}(G')) \lra \Spec(\boldsymbol{A}(G))
    \lra \Spec(\boldsymbol{A}(G'')) \ra 1.
\]
\end{cor}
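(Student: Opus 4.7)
The plan is to decompose the claim into three pieces: (i)~$g^*$ is injective, (ii)~$f^*$ is surjective, and (iii)~$\boldsymbol{A}(G')$ is realised as the quotient Hopf algebra $\boldsymbol{A}(G)\otimes_{\boldsymbol{A}(G'')}\k$, where $\k$ is an $\boldsymbol{A}(G'')$-module via the augmentation. Parts~(i) and~(ii) are immediate consequences of Theorem~\ref{thm:abelgp-mono-epi} since $g$ is an epimorphism and $f$ a monomorphism of finite abelian $p$-groups; on the scheme side these translate to $\Spec(g^*)$ being faithfully surjective and $\Spec(f^*)$ a closed immersion.

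For~(iii) the first step is to note that $f^*\circ g^*=(g\circ f)^*$ factors through $\boldsymbol{A}(1)=\k$, so the ideal $J := \boldsymbol{A}(G)\cdot g^*(\mathfrak{m}(G''))$ is contained in $\ker f^*$. This produces a surjective Hopf-algebra map $\bar f^*\colon \boldsymbol{A}(G)/J \twoheadrightarrow \boldsymbol{A}(G')$. A dimension count will then close the argument: combining Assumption~(\textbf{D}), Assumption~(\textbf{KF}) and the structure theorem for finite abelian $p$-groups yields $\dim_\k\boldsymbol{A}(G)=|G|^h$, where $h$ is the height of the $p$-divisible group $\{\mathcal{G}_r\}$, and since $|G|=|G'|\,|G''|$ we obtain
\[
\dim_\k\boldsymbol{A}(G)=\dim_\k\boldsymbol{A}(G')\cdot\dim_\k\boldsymbol{A}(G'').
\]

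The main obstacle, and the place where one invokes ``standard facts'', is to upgrade $\bar f^*$ from a surjection to an isomorphism. This follows if $\boldsymbol{A}(G)$ is free as a module over the sub-Hopf algebra $g^*\boldsymbol{A}(G'')$, since one then has $\dim_\k\boldsymbol{A}(G)/J = \dim_\k\boldsymbol{A}(G)/\dim_\k\boldsymbol{A}(G'')=\dim_\k\boldsymbol{A}(G')$, and a surjection between finite-dimensional $\k$-vector spaces of equal dimension is forced to be an isomorphism. The required freeness is the classical faithful-flatness theorem for Hopf subalgebras of finite commutative Hopf algebras over a field (see, \eg,~\cite{WCW:book}), and is the Hopf-algebraic characterisation of short exact sequences of commutative affine group schemes. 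Unpacking the isomorphism $\boldsymbol{A}(G')\iso\boldsymbol{A}(G)\otimes_{\boldsymbol{A}(G'')}\k$ as a statement about $\Spec(-)$ then yields the asserted exact sequence of group schemes.
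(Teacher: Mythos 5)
Your proposal is correct. The paper gives no proof here beyond the remark that the corollary ``follows easily using standard facts about commutative group schemes,'' and your argument is precisely a careful unpacking of those facts: injectivity of $g^*$ and surjectivity of $f^*$ from Theorem~\ref{thm:abelgp-mono-epi}, the observation that $J=\boldsymbol{A}(G)\cdot g^*\mathfrak{m}(G'')$ is a Hopf ideal contained in $\ker f^*$, the dimension count $\dim_\k\boldsymbol{A}(G)=|G|^h$ coming from Assumptions~(D) and~(KF), and freeness of a finite-dimensional commutative Hopf algebra over a Hopf subalgebra (Oberst--Schneider/Nichols--Zoeller, or the faithful-flatness results in~\cite{WCW:book}) to identify $\boldsymbol{A}(G)/J$ with $\boldsymbol{A}(G')$ and to see that $\Spec(g^*)$ is an fppf epimorphism. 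This is exactly the Hopf-algebraic characterisation of short exact sequences of finite commutative group schemes that the author has in mind, so there is nothing to add.
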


\section{Examples: Honda formal groups and $p$-divisible groups}
\label{sec:Honda}

For each $n\geq1$, there is a $p$-divisible group of
height~$n$, where
\[
H_r = \F[x_r]/(x_r^{q^r}),
\]
where $q$ is a power of $p$ or $\F$ is $\F_p$, $\F_q$
or $\F_{p^\infty}=\bar{\F}_p$. Here the coproduct on
$H_1$ has the form
\[
\psi(x_1) = x_1\otimes1 + 1\otimes x_1
  - \sum_{1\leq i \leq p-1}
\frac{1}{p}\binom{p}{i}x_1^{ip^{n-1}}\otimes x_1^{(p-i)p^{n-1}}.
\]
Furthermore, the natural homomorphisms
\[
H_r \lla H_{r+s} \lla H_s
\]
are given by
\[
H_{r+s} \lra H_r; \quad x_{r+s} \mapsto x_r
\]
and
\[
H_s \lra H_{r+s}\; \quad x_s \mapsto x_{r+s}^{q^r}.
\]
We also have
\[
\soc H_r = \F\{x_r^{q^r-1}\}.
\]
Passing to the limit we obtain
\[
H = \lim_r H_r = \F[\![x]\!],
\]
and the formal group is known as the \emph{Honda formal
group}.

This example is closely connected with the $n$-th Morava
$K$-theory $K_n^*(-)$. More precisely, the $2$-periodic
version has
\[
K_n^* = \F_{p^n}[u,u^{-1}]
\]
which is a graded field with $u\in K_n^{-2}$, and
\[
H_r = K_n^0(BC_{p^r}).
\]
Since $C_{p^r}$ is an abelian group, $BC_{p^r}$ is
a commutative $H$-space and so $K_n^0(BC_{p^r})$ is
a cocommutative graded Hopf algebra over $\F_{p^n}$.
For a suitable choice of compatible generators~$x_r$,
this agrees with the above algebraic example. This
example is also special because the dimension of the
$p$-divisible group is~$1$; in general the dimension
satisfies $1\leq\dim\mathcal{G}\leq h$. Because of the
way Morava $K$-theory and other topological examples
arise, they always give $1$-dimensional $p$-divisible
groups.

\appendix
\section{Some recollections on Frobenius algebras}\label{sec:FrobAlg}

We will use the phrase \emph{Frobenius algebra} to indicate
that an algebra~$A$ has at least one Frobenius structure,
$(A,\Phi)$, where $\Phi\:A\xrightarrow{\;\iso\;}A^*$ is a left
$A$-module isomorphism. It might be better to use Nakayama's
terminology \emph{Frobeniusean} of~\cite{Nakayama:FrobAlgI},
but as remarked by Lam~\cite{Lam:Mod&Rings}*{comments on page~453},
this has fallen out of fashion. For our purposes it is useful
to allow flexibility over the choice of Frobenius structure
on such an algebra. Our usage differs from that of
Koch~\cite{Kock:FrobAlg} who requires the Frobenius structure
as well as the underlying algebra.

Throughout we assume that $\k$ is a field and set
\[
\dim = \dim_\k,\quad \otimes=\otimes_\k,\quad \Hom=\Hom_\k.
\]

We assume that $A$ is a finite dimensional local $\k$-algebra;
here local means that $A$ has a unique maximal left, or
equivalently right, ideal which agrees with the Jacobson radical
$\rad A$, and the quotient $A/\rad A$ is the unique simple
$A$-module, and we will also assume that $A$ is augmented over
$\k$, so that $A/\rad A\iso\k$ is the unique simple $A$-module.
We will use some basic facts about the radical, in particular
it is nilpotent, say $(\rad A)^e=0$ and $(\rad A)^{e-1}\neq0$.
The \emph{socle} of $A$ is the right annihilator of $\rad A$,
\[
\soc A = \{z\in A: (\rad A)z=0\} \supseteq (\rad A)^{e-1},
\]
which is known to be the sum of all the simple left $A$-submodules
of $A$. Indeed, on choosing a minimal set of simple submodules
$V_i$ with $\soc A = V_1+\cdots +V_\ell$, we find that
\[
\soc A = V_1\oplus\cdots\oplus V_\ell.
\]
Of course, for each $i$ there is an isomorphism of $A$-modules
$V_i\iso\k$. Furthermore, $\soc A$ is also the left annihilator
of $\rad A$, and it intersects each non-zero left or right ideal
non-trivially.

Let $A^*=\Hom_\k(A,\k)$ be the $\k$-linear dual of $A$. Then
$A^*$ is a left $A$-module with scalar multiplication~$\.$
given by
\[
a\.f(x) = f(xa)\quad (a,x\in A,\;f\in A^*),
\]
and it is also a right $A$-module with scalar multiplication
\[
(f\.a)(x) = f(ax)\quad (a,x\in A,\;f\in A^*).
\]
In either case $A^*$ is an injective $A$-module.

We recall various aspects of a Frobenius algebra structure
on $A$ and their properties; details can be found
in~\cites{Kock:FrobAlg,Lam:Mod&Rings}. In particular, we
will follow recent tradition in using cobordism diagrams
to express relationships between the various structure
morphisms associated with a Frobenius algebra structure.

For $\Phi\in\Hom_A(A,A^*)\iso\Hom_\k(\k,A^*)\iso A^*$,
the pair $(A,\Phi)$ is a \emph{Frobenius algebra} over
$\k$ if $\Phi$ is an isomorphism. Since $A\iso A^*$,
$A$ is self-injective. It is immediate that if
$(A,\Theta)$ is also a Frobenius algebra then there
is a unit $u\in A^\times$ such that
$\Theta(-) = \Phi(-u^{-1})$, \ie, for all $a\in A$,
\[
\Theta(a) = \Phi(au^{-1}).
\]
This shows that
\begin{prop}\label{prop:FrobAlg-param}
Let $(A,\Phi)$ be a Frobenius algebra. Then the set
of all Frobenius algebras $(A,\Theta)$ is $1$-$1$
correspondence with the set of units $A^\times$.
\end{prop}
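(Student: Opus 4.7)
The plan is to identify a Frobenius structure with the element of $A^\times$ obtained by transporting it back through the fixed reference structure $\Phi$. Concretely, I would first recall the standard identification $\Hom_A(A,A^*)\iso A^*$ sending a left $A$-linear map $\Psi$ to $\Psi(1)$; equivalently, any such $\Psi$ has the form $\Psi(a)=a\cdot\Psi(1)$, where $\cdot$ is the left action $(a\cdot f)(x)=f(xa)$ recalled just before the proposition. So every Frobenius form is fully determined by its value on $1$.

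Now, given a second Frobenius structure $(A,\Theta)$, I would consider the composite
\[
\Phi^{-1}\circ\Theta\:A\lra A,
\]
which is an isomorphism of left $A$-modules because both $\Phi$ and $\Theta$ are. The key input is that $\End_A(A)\iso A^{\op}$ via right multiplication: every left $A$-linear endomorphism of $A$ is of the form $a\mapsto av$ for a unique $v\in A$, and it is an automorphism exactly when $v\in A^\times$ (since otherwise $v$ lies in $\rad A$, which is nilpotent, contradicting invertibility). Applying this to $\Phi^{-1}\circ\Theta$ yields a unique $v\in A^\times$ with $\Phi^{-1}\Theta(a)=av$ for all $a$, \ie, $\Theta(a)=\Phi(av)$; setting $u=v^{-1}$ gives the formula $\Theta(a)=\Phi(au^{-1})$ already noted in the preamble.

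Conversely, given $u\in A^\times$, I would define $\Theta_u(a)=\Phi(au^{-1})$ and verify directly that $\Theta_u$ is a left $A$-module map $A\to A^*$, using the left action recalled above:
\[
\Theta_u(ba)(x) = \Phi(bau^{-1})(x) = \Phi(au^{-1})(xb) = \bigl(b\cdot\Theta_u(a)\bigr)(x).
\]
Moreover $\Theta_u$ is an isomorphism because it is the composite of $\Phi$ with right multiplication by the unit $u^{-1}$. Hence $(A,\Theta_u)$ is a Frobenius structure.

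Finally, injectivity of $u\mapsto\Theta_u$ is immediate by evaluating at $a=1$: if $\Theta_u=\Theta_{u'}$ then $\Phi(u^{-1})=\Phi((u')^{-1})$, and since $\Phi$ is injective this forces $u=u'$. Combined with the previous paragraph this establishes the claimed bijection between $A^\times$ and the set of Frobenius structures on $A$. There is no substantial obstacle here; the only thing to be careful about is keeping the side conventions straight (left versus right $A$-module structure on $A^*$), which is why I would state explicitly the identification of $\End_A(A)$ with $A^{\op}$ before writing down the formula.
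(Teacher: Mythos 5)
Your proof is correct and takes essentially the same route as the paper, which simply records as ``immediate'' that a second Frobenius structure $\Theta$ must satisfy $\Theta(a)=\Phi(au^{-1})$ for a unit $u\in A^\times$ and lets the proposition follow; you merely supply the underlying details (the identification $\End_A(A)\iso A^{\op}$ via right multiplication, the converse construction $\Theta_u$, and injectivity of $u\mapsto\Theta_u$). No gaps.
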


Given a Frobenius structure $\Phi$ on $A$, the linear
form $\epsilon=\Phi(1)\in A^*$ has the following
property:
\begin{itemize}
\item
$\ker\epsilon$ contains no non-trivial left ideals.
\end{itemize}
This condition is equivalent to
\begin{itemize}
\item
$\epsilon$ is non-trivial on each simple left $A$-submodule
of~$A$.
\end{itemize}
Notice that if $\dim A>1$, then $\epsilon$ cannot be
a $\k$-algebra homomorphism.

We call $\epsilon$ the \emph{counit} of the Frobenius
algebra $(A,\Phi)$ and sometimes indicate it diagrammatically
by \intextcounitFA.

There is an associated $\k$-bilinear form
$\braket{-|-}\:A\otimes A\lra\k$ given by
\[
\braket{x|y} = \epsilon(xy)
\]
for $x,y\in A$. This is called the \emph{Frobenius
pairing} and is often denoted by the cobordism 
diagram \intextbilinFA. It satisfies the Frobenius 
associativity condition:
\begin{itemize}
\item
for $x,y,z\in A$, $\braket{x|yz} = \braket{xy|z}$.
\end{itemize}
The linear mapping
\[
\lambda\:A\lra A^*;\quad a\lra \braket{-|a}
\]
is non-degenerate since $Aa\subseteq\ker\epsilon$.
By finite dimensionality, the linear mapping
\[
\rho\:A\lra A^*;\quad a\lra \braket{a|-}
\]
is also non-degenerate, and these two linear mappings
give $\k$-linear isomorphisms $A\xrightarrow{\iso}A^*$.
The first of these is actually a left $A$-module isomorphism
$\lambda\:A\xrightarrow{\iso}A^*$, while the second is
a right $A$-module isomorphism. Of course we can recover
$\epsilon$ from $\braket{-|-}$ by using the functional
identities
\[
\epsilon(-) = \braket{-|1} = \braket{1|-}.
\]
Denoting the unit $\k\lra A$ by \intextunitFA, these
amount to the identities of the following cobordism
diagram.
\begin{center}
\begin{texdraw}\setunitscale 0.5
\move(0 0)\counitFA
\htext(40 0){$=$}
\move(80 -15)\unitFA
\move(80 -15)\bilinFA
\htext(130 0){$=$}
\move(170 15)\unitFA
\move(170 -15)\bilinFA
\end{texdraw}
\end{center}

The three structures $\Phi,\phi,\braket{-|-}$ with the above
properties give equivalent information and any one determines
the others, see~\cite{Kock:FrobAlg}*{section~2.2}.

\begin{prop}\label{prop:soc}
If $(A,\Phi)$ is a local Frobenius algebra, then $\dim\soc A=1$
and so there is an isomorphism of $A$-modules $\soc A\iso\k$.
In particular, $\soc A$ is a simple $A$-module.
\end{prop}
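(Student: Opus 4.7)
The plan is to transport the question across the Frobenius self-duality. Since $A$ is a finite-dimensional local $\k$-algebra with residue field $\k$ (the unique simple left $A$-module), the socle of any left $A$-module $M$ is a semisimple $A/\rad A = \k$-vector space, and its $\k$-dimension coincides with $\dim_\k \Hom_A(\k, M)$. Applying this to $M = A$, our task reduces to showing $\dim_\k \Hom_A(\k, A) = 1$.

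The key step is to choose a Frobenius structure $\Phi\: A \xrightarrow{\iso} A^*$ of left $A$-modules; it restricts to an isomorphism $\soc A \iso \soc A^*$, so it suffices to compute $\dim_\k \soc A^*$. Using the left action $(a \. f)(x) = f(xa)$ defined in the excerpt, $f \in \soc A^*$ means $f(x \rho) = 0$ for all $x \in A$ and $\rho \in \rad A$. Taking $x = 1$ forces $f|_{\rad A} = 0$; conversely, every $f$ vanishing on $\rad A$ lies in $\soc A^*$ since $x \rho \in \rad A$. Hence $\soc A^*$ is precisely the subspace of functionals that factor through $A / \rad A = \k$, which is one-dimensional (spanned by any nonzero scalar multiple of the augmentation).

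Combining these, $\dim_\k \soc A = 1$. Since $\soc A$ is a non-zero left $A$-module annihilated by $\rad A$, it is a one-dimensional $\k$-vector space, hence isomorphic to $\k$ as an $A$-module and in particular simple. There is no serious obstacle here: the entire argument hinges on keeping the left/right module structures on $A^*$ straight, but the excerpt has already pinned these down explicitly. An alternative to the direct socle computation would invoke the tensor-hom adjunction $\Hom_A(\k, \Hom_\k(A, \k)) \iso \Hom_\k(A \otimes_A \k, \k) = \k$, but the direct approach above avoids any bimodule bookkeeping.
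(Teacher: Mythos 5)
Your argument is correct, and it rests on the same core idea as the paper's proof: transport the socle computation across the Frobenius self-duality $A\iso A^*$. Where you diverge is in the final step. The paper computes $\Hom_A(\k,A^*)$ via the tensor--hom adjunction, $\Hom_A(\k,\Hom_\k(A,\k))\iso\Hom_\k(A\otimes_A\k,\k)\iso\k$, whereas you compute $\soc A^*$ directly by identifying it with the annihilator of $\rad A$ inside $A^*$, i.e.\ the functionals factoring through $A/\rad A\iso\k$ (you even flag the adjunction route as the alternative). Both are clean and both rely on keeping the side of the $A^*$-module structure straight, which the paper sets up carefully. Your direct computation is marginally more elementary since it does not invoke the adjunction isomorphism, but it does require one to check, as you did, that the condition $f(x\rho)=0$ for all $x$ collapses to $f|_{\rad A}=0$ using that $\rad A$ is a two-sided ideal; the adjunction packages that bookkeeping automatically. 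Either way the result follows, and the concluding sentences about $\soc A\iso\k$ being simple are immediate once $\dim_\k\soc A=1$ is known.
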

\begin{proof}
This amounts to verifying the degree zero Gorenstein condition.
Here we have a unique maximal ideal $\mathfrak{m}\lhd A$ with
$A/\mathfrak{m}\iso\k$ which gives $\k$ a unique $A$-module
structure. Since $A\iso A^*$ is self-injective,
\begin{align*}
\Hom_A(\k,\soc A) = \Hom_A(\k,A)
             &\iso \Hom_A(\k,A^*) = \Hom_A(\k,\Hom_\k(A,\k)) \\
             &\iso \Hom_\k(A\otimes_A\k,\k) \\
             &\iso \Hom_\k(\k,\k) \iso \k.
\end{align*}
Therefore $\dim\soc A=1$.
\end{proof}

%\begin{cor}\label{cor:soc}
%If $A$ is a Frobenius algebra, then $\soc A$ is the right
%annihilator of $\rad A$.
%\end{cor}
%\begin{proof}
%Suppose that $(\rad A)^{e-1}\neq0$ and $(\rad A)^e=0$,
%so $(\rad A)^{e-1}\subseteq\soc A$. Since $\dim_\k\soc A=1$,
%we must have $(\rad A)^{e-1}=\soc A$, hence
%\[
%(\soc A)(\rad A) = (\rad A)^e = 0,
%\]
%showing that $\soc A$ is also the left annihilator of $\rad A$.
%\end{proof}

Let $B$ be a $\k$-algebra. Recall that there is a sequence
of isomorphisms
\begin{equation}\label{eq:k-A}
B^* = \Hom_\k(B,\k) \xrightarrow{\iso} \Hom_\k(B\otimes_B B,\k)
\xrightarrow{\iso} \Hom_B(B,B^*),
\end{equation}
where we use left $B$-module structures except for in the
tensor product $B\otimes_B B$ where we use the right module
structure on the first factor and the left module structure
on the second factor. Under this composition, $\theta\in A^*$
corresponds to $\Theta\in\Hom_A(A,A^*)$ characterized by
$\Theta(1)=\theta$.
\begin{lem}\label{lem:Frobform-soc}
Suppose that $B$ is a finite dimensional $\k$-algebra for which
$\dim\soc B=1$. Suppose that $\theta\in B^*$ corresponds to
$\Theta\in\Hom_B(B,B^*)$ under the composition of the isomorphisms
of\/~\eqref{eq:k-A}. Then $(B,\Theta)$ is a Frobenius algebra
if and only if\/ $\theta$ is non-trivial on $\soc B$.
\end{lem}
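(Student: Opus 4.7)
The plan is to observe that $(B,\Theta)$ is Frobenius precisely when $\Theta\colon B\to B^*$ is injective (dimensions agree, so injectivity equals bijectivity), and then to exploit the hypothesis $\dim\soc B=1$ to reduce injectivity to a condition on $\soc B$.

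First I would unpack the correspondence of~\eqref{eq:k-A}. Since $\Theta$ is a left $B$-module map with $\Theta(1)=\theta$, we have $\Theta(b)=b\cdot\theta$ for the left action on $B^*$, that is, $\Theta(b)(x)=\theta(xb)$ for all $x,b\in B$. In particular, $\ker\Theta$ is a left ideal of $B$, and for $z\in\soc B$, $\Theta(z)=0$ iff $\theta(xz)=0$ for all $x\in B$. Since $\soc B$ is a left submodule, $xz\in\soc B$, so $\Theta|_{\soc B}=0$ iff $\theta|_{\soc B}=0$.

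Next I would invoke the standard fact that any non-zero left ideal of a finite-dimensional algebra contains a minimal (hence simple) left submodule, which must therefore lie in $\soc B$. Under the hypothesis $\dim\soc B=1$, this simple submodule is all of $\soc B$, so every non-zero left ideal of $B$ contains $\soc B$. Applied to $\ker\Theta$, this yields the equivalence
\[
\ker\Theta=0 \IFF \soc B\not\subseteq\ker\Theta \IFF \Theta|_{\soc B}\neq0.
\]
Combining this with the previous paragraph gives
\[
(B,\Theta)\text{ is Frobenius} \IFF \Theta\text{ is injective} \IFF \theta|_{\soc B}\neq0,
\]
which is the claim.

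I do not expect any real obstacle: the only slightly delicate point is being careful about which module structure on $B^*$ is being used (the left action $(a\cdot f)(x)=f(xa)$), so that $\Theta(b)(x)=\theta(xb)$ comes out correctly. Once that bookkeeping is in place, the argument is purely formal and uses only the hypothesis $\dim\soc B=1$ together with finite-dimensionality of $B$.
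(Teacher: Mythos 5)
Your argument is correct and follows essentially the same route as the paper's: both rest on the observation that when $\dim\soc B=1$, every non-zero left ideal of $B$ contains $\soc B$, and both then turn the Frobenius condition into a statement about $\theta$ not vanishing on the socle. The paper establishes the containment via nilpotency of the radical ($(\rad B)^rI=0$, $(\rad B)^{r-1}I\neq 0$, so $(\rad B)^{r-1}I=\soc B\subseteq I$) and phrases the Frobenius condition as $\ker\theta$ containing no non-trivial left ideal, whereas you invoke the existence of a minimal submodule and work directly with $\ker\Theta$ as a left ideal together with injectivity of $\Theta$; these are equivalent reformulations of the same argument.
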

\begin{proof}
Suppose that $(B,\Theta)$ is a Frobenius algebra and note
that $\soc B\iso\k$. Then $\theta\in B^*$ is the corresponding
Frobenius form. Since $\soc B$ is a non-trivial left ideal
in~$B$, $\soc B\nsubseteq\ker\theta$, therefore $\theta$ must
be non-trivial on $\soc B$.

Now suppose that $\theta\in B^*$ is non-trivial on $\soc B$.
If $I$ is a non-trivial left ideal in $B$, then for some
$r\geq1$, $(\rad B)^rI=0$ and  $(\rad B)^{r-1}I\neq 0$. But
then $0\neq(\rad B)^{r-1}I\subseteq\soc B$, and so
$\soc B=(\rad B)^{r-1}I\subseteq I$ since $\dim\soc B=1$.
Hence $\theta$ is not zero on $I$. This shows that $\theta$
must be non-trivial on every non-trivial left ideal and
it follows that $(B,\Theta)$ is a Frobenius algebra.
\end{proof}

\begin{lem}\label{lem:Frobform-unit}
Suppose that $(A,\Theta)$ is a Frobenius algebra with Frobenius
form $\theta\in A^*$. Let $u_0\in\soc A$ and $\theta(u_0)=1$,
and let $u_0,u_1,\ldots, u_{d-1}$ be a $\k$-basis for $A$.
Then given any sequence $t_0,t_1,\ldots, t_{d-1}\in\k$ with
$t_0\neq0$, there is a Frobenius algebra $(A,\Theta')$ whose
Frobenius form $\theta'$ satisfies $\theta'(u_i)=t_i$ for
every~$i$.
\end{lem}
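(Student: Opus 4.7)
The plan is to reduce the statement directly to Lemma \ref{lem:Frobform-soc}. First I would pin down the socle: since $(A,\Theta)$ is a local Frobenius algebra, Proposition \ref{prop:soc} gives $\dim_\k\soc A=1$, and together with the hypotheses $u_0\in\soc A$, $\theta(u_0)=1$, this forces $\soc A=\k\{u_0\}$.

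Next I would define the target linear form $\theta'\in A^*$ by declaring $\theta'(u_i)=t_i$ on the given basis and extending $\k$-linearly. Under the composite isomorphism \eqref{eq:k-A} this $\theta'$ corresponds to a unique left $A$-module map $\Theta'\colon A\to A^*$ with $\Theta'(1)=\theta'$. It then remains to check that $\Theta'$ is an isomorphism, i.e., that $(A,\Theta')$ is a Frobenius algebra. But by Lemma \ref{lem:Frobform-soc}, applicable since $\dim\soc A=1$, it suffices that $\theta'$ is non-trivial on $\soc A=\k u_0$, which is immediate from $\theta'(u_0)=t_0\neq 0$. The values $\theta'(u_i)=t_i$ then hold by construction.

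There is essentially no obstacle here once the problem is routed through Lemma \ref{lem:Frobform-soc}; the role of the hypothesis $t_0\neq 0$ is precisely to guarantee non-triviality on the socle. As a cross-check one could instead apply Proposition \ref{prop:FrobAlg-param}: the non-degenerate pairing $\braket{-|-}$ associated with $\theta$ yields a unique $v\in A$ with $\theta'(a)=\theta(av)$ for all $a\in A$, and because $u_0\cdot\rad A=0$ in a local Frobenius algebra (the left and right socles coincide), the condition $t_0=\theta(u_0v)\neq 0$ forces $v\notin\rad A$, so $v\in A^\times$; then $\theta'(-)=\theta(-\,u^{-1})$ with $u=v^{-1}$, exhibiting $\theta'$ as a Frobenius form in the parametrization of Proposition \ref{prop:FrobAlg-param}.
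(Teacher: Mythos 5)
Your proof is correct, and it takes a different (and in fact more economical) route than the paper's. The paper constructs $\theta'$ explicitly: it introduces the dual basis $v_0,\ldots,v_{d-1}$ with respect to the Frobenius pairing, arranges $v_0=1$ and $v_i\in\rad A$ for $i\geq1$, and sets $\theta'=(t_0+\sum_{i\geq1}t_iv_i)\cdot\theta$; the condition $t_0\neq0$ then makes the multiplier a unit, so $\theta'$ is a Frobenius form via the parametrization of Proposition~\ref{prop:FrobAlg-param}. You instead bypass the dual basis entirely: you simply define $\theta'$ by its values on the given basis and then invoke Lemma~\ref{lem:Frobform-soc} (together with $\dim\soc A=1$ from Proposition~\ref{prop:soc} and $\soc A=\k u_0$) to verify the Frobenius condition from $\theta'(u_0)=t_0\neq0$. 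Your main argument is shorter and makes the lemma look like a near-immediate consequence of the socle criterion, at the cost of losing the explicit formula relating $\theta'$ to $\theta$; the paper's argument is more constructive and exhibits the unit of Proposition~\ref{prop:FrobAlg-param} concretely. Your cross-check at the end recovers essentially the paper's point of view. Both are valid; no gaps.
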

\begin{proof}
By Lemma~\ref{lem:Frobform-soc}, we have $\theta(u_0)\neq0$,
so for simplicity we will assume that $\theta(u_0)=1$. Consider
the dual basis with respect to the associated bilinear form
$\braket{-|-}$, say $v_0,v_1,\ldots,v_{d-1}$ where
\[
\braket{u_i|v_j} = \delta_{i,j}.
\]
In fact, if $z\in \rad A$ then $\braket{z|u_0}=\theta(zu_0)=0$;
hence we can assume that $v_0=1$. Furthermore we then have
$v_i\in\rad A$ for $i\geq 1$. Now define
\[
\theta' = (t_0 + \sum_{1\leq i\leq d-1} t_i v_i)\.\theta \in A^*.
\]
Then for each $r$,
\begin{align*}
\theta'(u_t) &= \theta(t_0u_r + \sum_{1\leq i\leq d-1} t_i u_rv_i) \\
             &= \braket{u_r|t_01 + \sum_{1\leq i\leq d-1} t_i v_i} \\
             &= t_r.
\end{align*}
Then $\theta'\in A^*$ corresponds to $\Theta'\in \Hom_A(A,A^*)$
where $(A,\Theta')$ is a Frobenius algebra.
\end{proof}

We mention a general algebraic result which
appears in a special case in the topological
context of Morava $K$-theory. Let~$A$ and~$B$
be two local Frobenius algebras over the field~$\k$.
Then as $\k$-vector spaces,
\[
\soc A\iso\k\iso\soc B.
\]
When $\alpha\:B\lra A$ is a local algebra
homomorphism, $A$ becomes a left $B$-module
and the above isomorphisms are isomorphisms
of $B$-modules.
\begin{lem}\label{lem:soc-extension}
Given any isomorphism of $A$-modules
$\alpha\:\soc B\xrightarrow{\;\iso\;}\soc A$, there
are extensions to $A$-module homomorphisms $\alpha'\:B\lra A$.
\[
\xymatrix{
\soc B\ar[rr]^{\alpha}\ar[dd]_{\inc} && \soc A\ar[dd]^{\inc} \\
 && \\
B\ar[rr]^{\alpha'} && A
}
\]
\end{lem}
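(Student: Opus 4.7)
The plan is to reduce the extension problem to the self-injectivity of $A$ as a left module over itself, which is a standard consequence of the Frobenius property.

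First I would recall that, because $(A,\Phi)$ is a Frobenius algebra, the map $\Phi\colon A \xrightarrow{\iso} A^*$ is an isomorphism of left $A$-modules. Since $A^*=\Hom_\k(A,\k)$ is an injective left $A$-module (as noted earlier in the appendix), it follows that $A$ itself is an injective left $A$-module. This is the only structural fact we need beyond the set-up in the lemma.

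Next, view the given $\alpha\colon \soc B \xrightarrow{\iso} \soc A$ as an $A$-module map into $A$ by composing with the submodule inclusion $\soc A \hookrightarrow A$; call this composite $\widetilde{\alpha}\colon \soc B \to A$. On the other side, the inclusion $\soc B \hookrightarrow B$ is a monomorphism of left $A$-modules, since $\soc B$ is stable under left multiplication by elements of $A$ (the $A$-action on $B$ is via the algebra map $\theta$, so $\soc B$ as an $A$-submodule coincides with $\soc B$ as a $B$-submodule).

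The extension $\alpha'\colon B \to A$ is then produced by applying the injectivity of $A$: any $A$-module map from a submodule of $B$ into $A$ extends along the inclusion, giving a commutative diagram
\[
\xymatrix{
\soc B \ar[rr]^{\widetilde{\alpha}} \ar@{ >->}[d]_{\inc} && A \\
B \ar@{.>}[urr]_{\alpha'} &&
}
\]
which is exactly the claim. There is no serious obstacle here — the only content beyond the definition of an injective module is the observation that Frobenius implies self-injective, and this is available from the discussion in Appendix~\ref{sec:FrobAlg}. Note that $\alpha'$ is in general highly non-unique, since injectivity gives existence but not uniqueness of extensions.
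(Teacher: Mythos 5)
Your proof is correct and follows the same route the paper uses: observe that $A$ is self-injective (since the Frobenius isomorphism $A\iso A^*$ identifies $A$ with the injective module $A^*$), and then extend $\inc\circ\alpha\colon\soc B\to A$ along the inclusion $\soc B\hookrightarrow B$ by injectivity. You simply spell out a couple of points the paper leaves implicit (why $A$ is injective, and that $\soc B$ is an $A$-submodule of $B$), which does no harm.
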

\begin{proof}
Since $A$ is self-injective, the composition
$\inc\circ\alpha\:\soc B\lra A$ has such
extensions to $A$-module homomorphisms $B\lra A$.
\end{proof}

We also note the following.
\begin{lem}\label{lem:A->Bsoc}
Suppose that the algebra homomorphism $\alpha\:B\lra A$
is non-trivial on $\soc B$. Then there is an element
$a\in A$ for which
\[
a\alpha\soc B = \soc A.
\]
\end{lem}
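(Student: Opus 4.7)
The plan is to use the fact that a local Frobenius algebra has a one-dimensional socle that sits inside every nonzero left ideal, and then produce $b$ by writing $\soc B$ as a principal left ideal generated by an image under $\theta$.

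First I would fix a generator $s_A$ of the one-dimensional $\k$-vector space $\soc A$. The hypothesis that $\theta$ is non-trivial on $\soc A$ then means exactly that $\theta(s_A)\neq 0$ in $B$. I would next form the principal left ideal $B\theta(s_A)\subseteq B$; this is a nonzero left ideal, since $1\cdot\theta(s_A)=\theta(s_A)\neq 0$.

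The key step is then to invoke the fact that in the local Frobenius algebra $B$, the socle $\soc B$ is the unique minimal nonzero left ideal. Indeed, by Proposition~\ref{prop:soc}, $\dim\soc B=1$, and every nonzero left ideal $I\subseteq B$ contains some simple submodule $M$, which must lie inside $\soc B$; comparing dimensions forces $M=\soc B\subseteq I$. Applying this to $I=B\theta(s_A)$ yields $\soc B\subseteq B\theta(s_A)$, so there exists $b\in B$ with $b\theta(s_A)$ a nonzero element of $\soc B$.

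Finally, since $\soc B$ is one-dimensional and $b\theta(s_A)\in\soc B$ is nonzero, $b\theta(s_A)$ spans $\soc B$; combined with $\soc A=\k s_A$, this gives
\[
b\,\theta(\soc A)=b\,\theta(\k s_A)=\k\,b\theta(s_A)=\soc B,
\]
as required. The only nontrivial ingredient is the minimality of $\soc B$ among nonzero left ideals, which is precisely the Gorenstein/Frobenius input established earlier; everything else is a direct unwinding of definitions.
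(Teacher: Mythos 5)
Your proof is correct and follows essentially the same route as the paper's: form the nonzero left ideal $B\theta\soc A$, observe that it must contain $\soc B$ because every nonzero left ideal meets (hence, by one-dimensionality, contains) the socle, and then extract a suitable $b$. If anything your last step is a bit more careful than the paper's wording, since you explicitly pick $b$ so that $b\theta(s_A)$ lies in $\soc B$ rather than merely being nonzero.
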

\begin{proof}
By assumption $\dim\alpha\soc B=1$, so
$\{0\}\neq A\alpha\soc B\ideal A$ and
$(A\alpha\soc B)\cap\soc A\neq\{0\}$. Hence
we must have $\soc A\subseteq A\alpha\soc B$.
For any $a\in A$ satisfying
$\soc A\cap a\alpha\soc B\neq\{0\}$, as
$\dim\soc A=1$ we have $a\alpha\soc B=\soc A$.
\end{proof}

\begin{prop}\label{prop:Gysin}
Let $\alpha\:B\lra A$ be an $A$-module homomorphism
for which $\alpha\soc B = \soc A$, and let $\lambda$
be a Frobenius form on $A$. Then $\lambda\circ\alpha$
is a Frobenius form on~$B$.
\end{prop}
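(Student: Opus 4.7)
The plan is to reduce the claim to Lemma~\ref{lem:Frobform-soc}, which characterizes Frobenius forms on a finite dimensional algebra with one-dimensional socle as precisely those linear forms that are non-trivial on the socle. Since $B$ is assumed to be a local Frobenius algebra (as set out in the paragraph introducing Lemma~\ref{lem:soc-extension}), Proposition~\ref{prop:soc} gives $\dim_\k \soc B = 1$, so the hypothesis of Lemma~\ref{lem:Frobform-soc} applies to $B$.

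It then suffices to verify that $\lambda \circ \alpha \in B^*$ is non-trivial on $\soc B$. Here I would simply compute
\[
(\lambda \circ \alpha)(\soc B) = \lambda\bigl(\alpha(\soc B)\bigr) = \lambda(\soc A),
\]
where the second equality uses the hypothesis $\alpha\soc B = \soc A$. Because $\lambda$ is itself a Frobenius form on $A$, the ideal $\ker\lambda$ contains no non-trivial left ideals; since $\soc A$ is a non-zero left ideal of $A$, this forces $\lambda(\soc A) \neq 0$. Thus $\lambda \circ \alpha$ is non-trivial on $\soc B$, and Lemma~\ref{lem:Frobform-soc} concludes that $\lambda \circ \alpha$ is a Frobenius form on $B$.

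There is no substantial obstacle: the only thing to be careful about is that $\alpha$ is only assumed to be an $A$-module homomorphism, not a $B$-module homomorphism, but this is immaterial since the criterion for being a Frobenius form on $B$ is purely a $\k$-linear condition about non-triviality on $\soc B$. Likewise, one need not track explicitly the associated isomorphism $B \xrightarrow{\sim} B^*$; it is supplied by the composition \eqref{eq:k-A} once non-triviality on the socle is established.
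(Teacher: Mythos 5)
Your proof is correct and follows exactly the paper's argument: both rely on the observation that $(\lambda\circ\alpha)(\soc B)=\lambda(\soc A)\neq0$ and then invoke the socle criterion of Lemma~\ref{lem:Frobform-soc}. The paper simply states this in one line ("this is immediate"), whereas you spell out the reduction to the lemma and the remaining bookkeeping; the content is the same.
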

\begin{proof}
Since
\[
\lambda\circ\alpha\soc B = \lambda\soc A \neq0,
\]
this is immediate.
\end{proof}
\begin{prop}\label{prop:Gysin-construction}
Suppose that $\lambda_A$ and $\lambda_B$ are Frobenius
forms for $A$ and $B$ respectively. Then there is an
$A$-module homomorphism $\alpha\:B\lra A$ for which
$\alpha\soc B = \soc A$.
\end{prop}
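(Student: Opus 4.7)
The plan is to use the Frobenius forms $\lambda_A$ and $\lambda_B$ to identify each algebra with its $\k$-linear dual as an $A$-module, and then obtain $\alpha$ by dualizing the given algebra homomorphism $\theta\:A\to B$. This is the natural construction underlying the adjunction $\Hom_A(B,A)\iso\Hom_A(B,A^*)\iso B^*$, where the second isomorphism comes from $A^*$ being the co-induced (and hence injective) $A$-module, transported to $A$ itself via $\phi_A$.

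Concretely, I would define $A$-module isomorphisms $\phi_A\:A\xrightarrow{\iso}A^*$ and $\phi_B\:B\xrightarrow{\iso}B^*$ by
\[
\phi_A(a)(x) = \lambda_A(xa),\qquad \phi_B(b)(x) = \lambda_B(xb);
\]
these are well-defined isomorphisms because $\lambda_A$ and $\lambda_B$ are Frobenius forms (so the associated bilinear pairings are non-degenerate), and they are $A$-linear with $A$ acting on $B$ and on $B^*$ through $\theta$. The $\k$-linear dual $\theta^*\:B^*\to A^*$, given by $\theta^*(f)=f\circ\theta$, is an $A$-module homomorphism since $\theta$ is an algebra map. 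I then set
\[
\alpha = \phi_A^{-1}\circ\theta^*\circ\phi_B\:B\longrightarrow A.
\]
By construction, $\alpha$ is $A$-linear, and unwinding the definitions shows that $\alpha(b)$ is the unique $w\in A$ satisfying $\lambda_A(aw)=\lambda_B(\theta(a)b)$ for all $a\in A$. Taking $a=1$ yields the clean identity $\lambda_A\circ\alpha = \lambda_B$, expressing the natural compatibility between the two Frobenius structures.

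It remains to verify that $\alpha(\soc B) = \soc A$. Since $\alpha$ is $A$-linear and $\soc B$ is annihilated by $\theta(\rad A)\subseteq\rad B$, the image $\alpha(\soc B)$ is annihilated by $\rad A$, hence lies in $\soc A$. Choose any $0\neq z\in\soc B$; by Lemma~\ref{lem:Frobform-soc}, $\lambda_B(z)\neq 0$, and therefore $\lambda_A(\alpha(z)) = \lambda_B(z) \neq 0$, forcing $\alpha(z)\neq 0$. Since $\dim_\k\soc A=1$ by Proposition~\ref{prop:soc}, the element $\alpha(z)$ spans $\soc A$, giving $\alpha(\soc B) = \soc A$ as required. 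The only mildly subtle step is the bookkeeping for the $A$-actions: checking that each of $\phi_A$, $\phi_B$, and $\theta^*$ really is left $A$-linear with respect to the conventions $(a\cdot f)(x)=f(xa)$ on duals, and that the actions on $B^*$ arising via $\theta$ match those induced by $\theta^*$; once this is set up, the Frobenius property of $\lambda_A,\lambda_B$ supplies both the invertibility of $\phi_A,\phi_B$ and the non-vanishing on socles that powers the argument. Alternatively, one could bypass the Frobenius forms entirely and simply invoke Lemma~\ref{lem:soc-extension}, extending any $A$-module isomorphism $\soc B\xrightarrow{\iso}\soc A$ to an $A$-linear map $B\to A$ via the self-injectivity of $A$; but the construction above is more canonical and already produces the Frobenius-compatible $\alpha$ implicit in Proposition~\ref{prop:Gysin}.
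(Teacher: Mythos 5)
Your proposal is correct and takes essentially the same route as the paper: define $\alpha$ as the composite $B\xrightarrow{\phi_B}B^*\xrightarrow{\theta^*}A^*\xrightarrow{\phi_A^{-1}}A$ and then verify that it carries $\soc B$ isomorphically onto $\soc A$. Your observation that $\lambda_A\circ\alpha=\lambda_B$ is a slightly cleaner way to get the non-vanishing on the socle than the paper's direct evaluation of the intermediate map $\alpha'$ at $1$, and your containment argument via $A$-linearity of $\alpha$ together with $\theta(\rad A)\subseteq\rad B$ replaces the paper's explicit computation with $\alpha'$, but both are the same idea.
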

\begin{proof}
The Frobenius forms give rise to an isomorphism of
$A$-modules
\[
A^* \xrightarrow{\;\iso\;} A;\quad \lambda_A \mapsto 1,
\]
and an isomorphism of $B$-modules (and hence of $A$-modules)
\[
B \xrightarrow{\;\iso\;} B^*;\quad 1\mapsto \lambda_B.
\]
Composing with $\theta$ we obtain a homomorphism of $A$-modules
$\alpha$,
\[
\xymatrix{
B \ar[r]_{\iso}\ar@/^19pt/[rrrr]^{\alpha}\ar@/_19pt/[rrr]_{\alpha'}
  & B^*\ar[rr]_{\theta^*} & & A^*\ar[r]_{\iso} & A
}
\]
where $\theta^*(f) = f\circ\theta$. Let $\alpha'\:B\lra A^*$
be the intermediate composition, given by
\[
\alpha'(b)(a) = \lambda_B(\theta(a)b)
\]
for $b\in B$ and $a\in A$. In particular, if $z\in\soc B$
and $w\in\rad A$, then by definition of $w\alpha'(z)$ we
have
\[
(w\alpha'(z))(a) = \lambda_B(\theta(aw)z)
                 = \lambda_B(\theta(a)\theta(w)z)
                 = 0,
\]
since $\theta$ is local and therefore $\theta(w)z=0$. It
follows that $w\alpha(z)=0$ for all $w\in\rad A$ and so
$\alpha\soc B\subseteq\soc A$. If $z\neq0$,
\[
\alpha'(z)(1) = \lambda_B(\theta(1)z) = \lambda_B(z) \neq 0
\]
since $\dim\soc B=1$ and $\lambda_B$ is non-trivial on
$\soc B$. Hence we have $\alpha\soc B = \soc A$.
\end{proof}

\end{document}